\def\D{{\mathcal{D}}}
\def\d{{\rm d}}
\def\E{{\mathbb{E}}}
\def\F{{\mathcal{F}}}
\def\L{{\mathcal{L}}}
\def\n{\mathcal{N}}
\def\N{{\mathbb{N}}}
\def\P{{\mathbb{P}}}
\def\R{{\mathbb{R}}}
\def\S{{\mathbb{S}}}
\def\T{{\mathcal{T}}}
\def\epsilon{{\varepsilon}}
\def\phi{{\varphi}}
\def\one{\mathds{1}}
\DeclareMathOperator{\id}{id}
\DeclareMathOperator{\sym}{sym}
\DeclareMathOperator{\Lip}{Lip}
\DeclareMathOperator{\supp}{supp}
\DeclareMathOperator{\UC}{UC}
\DeclareMathOperator{\tr}{tr}
\DeclareMathOperator{\loc}{loc}
\newtheorem{theorem}{Theorem}[section]
\newtheorem{lemma}[theorem]{Lemma}
\newtheorem{corollary}[theorem]{Corollary}
\theoremstyle{definition}
\newtheorem{definition}[theorem]{Definition}
\newtheorem{example}[theorem]{Example}
\newtheorem{remark}[theorem]{Remark}
\newtheorem{assumption}[theorem]{Assumption}
\newcommand{\Newline}{{\rule{0mm}{1mm}\\[-3.25ex]\rule{0mm}{1mm}}}
\setlist[enumerate]{leftmargin=*, label=(\roman*)}
\numberwithin{equation}{section}
\begin{document}

\title[Nonlinear semigroups built on generating families]{Nonlinear semigroups 
built on generating families and their Lipschitz sets}

\author{Jonas Blessing}
\address{Department of Mathematics and Statistics, University of Konstanz}
\email{jonas.blessing@uni-konstanz.de}

\author{Michael Kupper}
\address{Department of Mathematics and Statistics, University of Konstanz}
\email{kupper@uni-konstanz.de}

\date{\today}

\thanks{We thank Robert Denk, Stephan Eckstein, Karsten Herth, Markus Kunze, 
Max Nendel, Reinhard Racke and Liming Yin for helpful comments and discussions.
Furthermore, we thank an anonymous referee for valuable comments and feedback
on an earlier version of the paper.}

\subjclass[2010]{}

\begin{abstract}
Under suitable conditions on a family $(I(t))_{t\geq 0}$ of Lipschitz mappings on a 
complete metric space, we show that, up to a subsequence, the strong limit 
$S(t):=\lim_{n\to\infty}(I(t 2^{-n}))^{2^n}$ exists for all dyadic time points $t$, and 
extends to a strongly continuous semigroup $(S(t))_{t\geq 0}$. The common idea in 
the present approach is to find conditions on the generating family $(I(t))_{t\ge 0}$, 
which can be transferred to the semigroup. The construction relies on 
the Lipschitz set, which is invariant under iterations and allows to preserve Lipschitz 
continuity to the limit. Moreover, we provide a verifiable condition which ensures that 
the infinitesimal generator of the semigroup is given by 
$\lim_{h\downarrow 0}\tfrac{I(h)x-x}{h}$, whenever this limit exists. The results are 
illustrated with several examples of nonlinear semigroups such as robustifications 
and perturbations of linear semigroups.

\smallskip
\noindent 
\emph{Key words:} Nonlinear semigroup,  infinitesimal generator, Lipschitz set, Chernoff approximation

\smallskip
\noindent \emph{AMS 2020 Subject Classification:} Primary 47H20; 47J25; Secondary 47J35; 35B20; 35K55
\end{abstract}

\maketitle

\setcounter{tocdepth}{1}

\section{Introduction}

Let $X$ be a complete metric space and $(I(t))_{t\geq 0}$ a family of Lipschitz 
continuous mappings $I(t)\colon X\to X$. We are interested in the question whether 
one can construct an associated semigroup $(S(t))_{t\geq 0}$, which satisfies 
\begin{equation} \label{eq:cher}
 S(t)x=\lim_{n\to\infty}\big(I( 2^{-n}t)\big)^{2^n} x. 
\end{equation}
Formulas of this type are called \emph{Chernoff approximation}. In his 
monograph~\cite{chernoff1974product}, Chernoff generalized his previous 
work~\cite{chernoff1968} and the results by Trotter~\cite{trotter1958,trotter1959}:
under additional stability conditions on the approximating sequence 
$(I(2^{-n}t)^{2^{n}}x)_{n\in\N}$ and the assumption that $(I(t))_{t\geq 0}$ is strongly 
continuous, it was shown in~\cite[Theorem~2.5.3]{chernoff1974product} that
the family $(S(t))_{t>0}$ is a strongly continuous semigroup of Lipschitz 
continuous mappings.

In this article, we provide a detailed study how a modified version of the Chernoff 
approximation can be used for the construction of nonlinear semigroups. The key 
idea is to find properties of $(I(t))_{t\geq 0}$, which are preserved during the 
iteration, and can therefore be transferred to the semigroup $(S(t))_{t\geq 0}$. 
Typically, the family $(I(t))_{t\geq 0}$ has a representation, which allows for explicit 
verification of these properties. Compared to formula~\eqref{eq:cher}, we take the 
limit only along a convergent subsequence, i.e.,
\begin{equation} \label{eq:cher2}
 S(t)x=\lim_{l\to\infty}I( 2^{-n_l}t)^{2^{n_l}}x. 
\end{equation}
Clearly, under suitable compactness and separability assumptions, we can choose a 
diagonal sequence for all $(x,t)$ in a countable dense subset $\D\times\T$of 
$X\times\R_+$. Hence, we have to extend the mapping $\D\times\T\to X,\; (x,t)\mapsto S(t)x$ 
to the closure $X\times\R_+$. To do so, we need sufficient Lipschitz continuity of 
$(I(t))_{t\geq 0}$ in the variables $(x,t)\in\D\times\T$, which is preserved during the 
construction. In several examples, the boundedness and Lipschitz assumptions on 
the family $(I(t))_{t\geq 0}$ are easily verified. To prove the relative compactness of 
the sequence $(I(2^{-n}t)^{2^n}x)_{n\in\N}$ is more involved and depends on the 
choice of the space $X$. The described construction leads to a strongly continuous 
semigroup $(S(t))_{t\geq 0}$ of locally Lipschitz continuous operators $S(t)\colon X\to X$. 

Another focus of this work is the connection between the local  behaviour of $(I(t))_{t\geq 0}$ 
and $(S(t))_{t\geq 0}$. Let $X$ be a Banach space. If $(I(t))_{t\geq 0}$ is a family of linear 
contractions, it has been shown in~\cite[Theorem 3.7]{chernoff1974product} that the 
infinitesimal generator of $(S(t))_{t\geq 0}$ is an extension of the derivative $I'(0)$. 
In this spirit, in Theorem~\ref{thm:gen}, we provide an abstract technical condition, which 
ensures that 
\[ \lim_{t\downarrow 0}\left\|\frac{I(t)x-x}{t}-y\right\|=0 \quad\mbox{implies}\quad
	\lim_{t\downarrow 0}\left\|\frac{S(t)x-x}{t}-y\right\|=0. \]
For instance, this result always holds, if $(I(t))_{t\geq 0}$ consists of convex monotone
operators. We also discuss whether $(S(t))_{t\geq 0}$ represents the unique solution 
to the abstract Cauchy problem
\[ \partial_t u(t)=Au(t) \quad\mbox{for all } t\geq 0, \quad u(0)=x,\]
where $A$ denotes the generator of $(S(t))_{t\geq 0}$ and $x\in D(A)$. Unlike to 
the theory of linear semigroups, this is not immediately clear and depends strongly on 
additional properties of $X$. In particular, one has to verify $S(t)\colon D(A)\to D(A)$
for all $t\geq 0$, which is in general wrong, see~\cite{denk2020convex}. In contrast,
the invariance of the \emph{symmetric Lipschitz set}~\cite{denk2020convex} does 
not depend on $X$, but rather on convexity and monotonicity of $(S(t))_{t\geq 0}$. 
We provide sufficient conditions on $(S(t))_{t\geq 0}$ for the invariance and investigate 
how these conditions can be derived from $(I(t))_{t\geq 0}$. Furthermore, the symmetric 
Lipschitz set can be completely described by $(I(t))_{t\geq 0}$, and therefore explicitly 
determined in examples, see Subsection~\ref{sec:gexp}. Finally, we want to emphasise
that the results on the generator and the symmetric Lipschitz set do not require norm
convergence of the approximating sequence as in equation~\eqref{eq:cher2}.

The present approach is inspired by the \emph{Nisio semigroup}~\cite{MR0451420},
where the sequence $(I(2^{-n}t)^{2^n}x)_{n\in\N}$ is non-decreasing. In this case,
the limit in equation~\eqref{eq:cher2} does not depend on the choice of the convergent
subsequence, i.e., equation~\eqref{eq:cher} and equation~\eqref{eq:cher2} are equivalent,
see Subsection~\ref{sec:nisio}. The Nisio semigroup is obtained from the family 
$I(t)x:=\sup_{\lambda\in\Lambda} S_\lambda(t)x$, where $(S_\lambda)_{\lambda\in\Lambda}$
is a family of monotone linear semigroups. In case that each $(S_\lambda(t))_{t\geq 0}$ 
is the semigroup of a Markov process, the respective Nisio semigroup corresponds to 
a stochastic process under parameter uncertainty, see~\cite{dkn2,roecknen}. Typical 
examples include Brownian motions with drift or volatility uncertainty, 
see~\cite{MR1906435,PengG,MR2474349}, and L\'evy processes with uncertainty 
in the L\'evy triplet, see~\cite{Hollender16,PengHu,Kuehn19,NutzNeuf}. Compared to the
Nisio semigroup, the construction based on the norm convergence as in 
equation~\eqref{eq:cher2} does not rely at all on monotonicity and we do not require 
$(I(t))_{t\geq 0}$ to be the supremum over a family of monotone linear semigroups. 
For instance, this is illustrated in Subsection~\ref{sec:rde} by means of reaction-diffusion
equations, where the operators $I(t)$ are neither convex nor monotone.

In the literature, the Chernoff approximation is mainly used as a tool for finding 
approximative representations of semigroups or solutions of evolution equations, 
for which the existence is already established. In many applications the goal is to 
find an approximation of the form~\eqref{eq:cher} to give explicit representations 
of semigroups, see, e.g.,~\cite{stt2002,bss2010,oss2017}. For a survey of 
Chernoff approximations of operator semigroups we refer to~\cite{butko2018method}, 
see also~\cite{gkt2019} for an overview on different kinds of approximations. A classical 
approach to nonlinear PDEs and the respective nonlinear semigroups is based on the 
theory of maximal monotone or m-accretive operators, 
see~\cite{Barbu10,Benilan-Crandall91,Brezis71,Evans87,Kato67}. For HJB-type equations, 
as outlined in~\cite{denk2020convex}, it is rather delicate to verify the m-accreditivity.
Furthermore, the theory of backward stochastic differential equations provides a powerful 
tool for the representation of second order quasi-linear equations,
see, e.g.,~\cite{delbaen2011backward,el1997backward}, as well 
as~\cite{kazi2015second,soner2012wellposedness} for fully nonlinear equations.

The paper is organized as follows. In Section~\ref{sec:construct}, we introduce the basic 
assumptions and state the main result on the existence of nonlinear semigroups. In 
Section~\ref{sec:cont}, we investigate how to verify the imposed compactness assumption
for certain spaces of continuous functions.  In Section~\ref{sec:gen}, we prove that the 
generator of $(S(t))_{t\geq 0}$ is an extension of the derivative $I'(0)$.  In Section~\ref{sec:Lip}, 
we study the symmetric Lipschitz set. Finally, in Section \ref{sec:examples} the main results 
are illustrated with several examples.

\section{Construction of nonlinear semigroups}
\label{sec:construct}

Let $(X,d)$ be a complete metric space. We denote by $B(x,r):=\{y\in X\colon d(x,y)\leq r\}$
the closed ball with radius $r\geq 0$ around $x\in X$, and by $\R_+:=\{x\in\R\colon x\geq 0\}$ 
the positive real numbers including zero. Let $(I(t))_{t\geq 0}$ be a family of operators 
$I(t)\colon X\to X$ satisfying the following boundedness and Lipschitz conditions.

\begin{assumption} \label{ass:I} 
 The family $(I(t))_{t\geq 0}$ satisfies the following properties:
 \begin{enumerate}
  \item $I(0)=\id_X$.
  \item There exists $x_0\in X$ such that
   \[ I(t)\colon B(x_0,r)\to B(x_0,\alpha(r,t)) \quad\mbox{for all } r,t\geq 0, \]
   where $\alpha\colon\R_+\times\R_+\to\R_+$ is a function, which is non-decreasing in the
   second argument, and satisfies
   \begin{equation} \label{eq:alpha}
    \alpha(\alpha(r,s),t)\leq\alpha(r,s+t) \quad\mbox{for all } r,s,t\geq 0.
   \end{equation}
 \item For every $r\geq 0$, there exists $\omega_r\geq 0$ such that
  \[ d\big(I(t)x,I(t)y\big)\leq e^{t\omega_r}d(x,y)
  	\quad\mbox{for all } t\in [0,1] \mbox{ and } x,y\in B(x_0,r). \]
 \end{enumerate}
\end{assumption}

\begin{remark} \label{rem:ass I} \Newline
 \begin{enumerate} 
  \item It holds $\alpha(r,t)\geq r$ for all $r,t\geq 0$, because the non-decreasingness of $\alpha$ 
   in the second argument and $I(0)=\id_X$ imply $\alpha(r,t)\geq\alpha(r,0)\geq r$. 
   Furthermore, we assume w.l.o.g. that the mapping $\R_+\to\R_+,\; r\mapsto w_r$ is non-decreasing. 
  \item Let $(I(t))_{t\geq 0}$ be a strongly continuous semigroup of bounded linear operators
   on a Banach space $(X,\|\cdot\|)$. Then,  there exist $M\geq 1$ and $\omega\in\R$ such that 
   $\|I(t)x\|\leq M e^{\omega t}\|x\|$ for all $t\geq 0$ and $x\in X$, see~\cite{pazy2012semigroups}. 
   If $(I(t))_{t\geq 0}$ is quasi-contractive, i.e.  $M=1$, Assumption~\ref{ass:I}(iii) is satisfied. 
   In the nonlinear case, the exponent $\omega_r$ might depend on $r$. Since we are only 
   interested in the short time behaviour of $(I(t))_{t\geq 0}$, we do not require this property for $t>1$. 
 \end{enumerate}
\end{remark}

\begin{definition} \label{def:Lset} 
 Let $(J(t))_{t\ge 0}$ be a family of operators $J(t)\colon X\to X$. The \emph{Lipschitz set}
 $\L^J$ consists of all $x\in X$, for which there exist $t_0>0$ and $c\geq 0$ with
 \[ d\big(J(t)x,x\big)\leq ct \quad\mbox{for all } t\in [0,t_0]. \]
\end{definition}

For convex semigroups, the Lipschitz set was introduced in~\cite{denk2020convex}.
The Lipschitz set allows us to establish strong continuity of the semigroup $S$, and will be 
used in Subsection~\ref{sec:gexp} to prove a regularity result. 

Let $\T:=\{k2^{-n}\colon k,n\in\N_0\}$ be the set of all positive dyadic numbers, which is 
countable and dense in $\R_+$. We define the partitions $\pi_n^t:=\{k2^{-n}\wedge t\colon k\in\N_0\}$ 
and the iterated operators $I(\pi_n^t):=I(2^{-n})^{2^n t}$ for all $t\in\T$ and $n\in\N$ with 
$2^n t\in\N$. The verification of the following compactness assumption will be discussed 
in Section~\ref{sec:cont} for certain spaces of continuous functions.

\begin{assumption} \label{ass:D}
 There exists a countable set $\D\subset \L^I$, which is dense in $X$. Furthermore, the sequence
 $(I(\pi_n^t)x)_{n\in\N}$ is relatively compact in $X$ for all $(x,t)\in\D\times\T$.
\end{assumption}

The previous assumption implies the existence of a subsequence $(n_l)_{l\in\N}\subset\N$
such that we can define $S(t)x:=\lim_{l\to\infty}I(\pi^t_{n_l})x$ for all $(x,t)\in\D\times\T$. 
Then, since the Lipschitz continuity of $(I(t))_{t\geq 0}$ in the variables $(x,t)\in\D\times\T$ 
from Assumption~\ref{ass:I} and Definition~\ref{def:Lset} is preserved during the iteration
and in the limit, we can extend the mapping $\D\times\T\to X,\; (x,t)\mapsto S(t)x$ 
to the closure $\overline{\D\times\T}=X\times\R_+$. This leads to the first main result.

\begin{theorem} \label{thm:construct} 
 Suppose that $(I(t))_{t\geq 0}$ satisfies Assumption~\ref{ass:I} and Assumption~\ref{ass:D}. 
 Then, there exists a family $(S(t))_{t\geq 0}$ of operators  $S(t)\colon X\to X$ with 
 the following properties:
 \begin{enumerate}
  \item There exists a subsequence $(n_l)_{l\in\N}\subset\N$ such that 
   \[ S(t)x=\lim_{l\to\infty}I\big(\pi_{n_l}^t\big)x \quad\mbox{for all } (x,t)\in X\times\T. \]
  \item The family $(S(t))_{t\geq 0}$ forms a semigroup, i.e., 
   \[S(0)=\id_X \quad\mbox{and}\quad S(s+t)=S(s)S(t) \quad\mbox{for all } s,t\geq 0.\]	
  \item For every $r,t\geq 0$, 
   \begin{align*}
    d\big(x_0,S(t)x\big) &\leq\alpha(r,t) \quad\mbox{for all } x\in B(x_0,r), \\
    d\big(S(t)x,S(t)y\big) &\leq e^{t\omega_{\alpha(r,t)}}d(x,y)
    	\quad\mbox{for all } x,y\in B(x_0,r).
   \end{align*}
  \item The semigroup $(S(t))_{t\geq 0}$ is strongly continuous, i.e., the mapping
   \[  \R_+\to X,\; t\mapsto S(t)x \]
   is continuous for all $x\in X$.
  \item For every $r\geq 0$ and $x\in B(x_0,r)\cap\L^I$, there exists $c\geq 0$ such that 
   \[ d\big(S(s)x,S(t)x\big)\leq ce^{T\omega_{\alpha(r,T)}}|s-t]
   	\quad\mbox{for all } T\geq 0 \mbox{ and } s,t\in [0,T]. \]
 \end{enumerate}
\end{theorem}

We call $(I(t))_{t\geq 0}$ a \emph{generating family} of the semigroup $(S(t))_{t\geq 0}$, 
and $(S(t))_{t\geq 0}$ an  \emph{associated semigroup} to the family $(I(t))_{t\geq 0}$.

\begin{corollary} \label{cor:Lset}
 It holds $\L^I\subset\L^S$ and $S(t)\colon\L^S\to\L^S$ for all $t\geq 0$.  
\end{corollary}
\begin{proof} 
 The inclusion $\L^I\subset\L^S$ follows from Theorem~\ref{thm:construct}(v) and 
 $S(0)=\id_X$. Fix $x\in\L^S$ and choose $t_0>0$ and $c\geq 0$ such that
 \begin{equation} \label{eq:cor Lset}
  d\big(S(t)x,x\big)\leq ct \quad\mbox{for all } t\in [0,t_0]. 
 \end{equation}
 We use Theorem~\ref{thm:construct}(iii), inequality~\eqref{eq:cor Lset} and 
 inequality~\eqref{eq:alpha} to conclude that
 \begin{align*}
  d\big(S(s)S(t)x,S(t)x\big) &=d\big(S(t)S(s)x,S(t)x\big)
  \leq e^{t\omega_{\alpha(\alpha(r,s),t)}}d\big(S(s)x,x\big) \\
  &\leq ce^{t\omega_{\alpha(r,s+t)}}s
  \quad\mbox{for all } t\geq 0 \mbox{ and } s\in [0,t_0]. \qedhere
 \end{align*}
\end{proof}

\subsection{Proof of Theorem~\ref{thm:construct}}

In the sequel, we establish a series of lemmas, which prove Theorem~\ref{thm:construct}. 
We will always suppose that Assumption~\ref{ass:I} and Assumption~\ref{ass:D} hold.

\begin{lemma} \label{lem:iterate}
 For every $r\geq 0$, $t\in\T$ and $n\in\N$ with $2^nt\in\N$,
 \begin{align*}
  d\big(x_0,I(\pi_n^t)x\big) &\leq\alpha(r,t) \quad\mbox{for all } x\in B(x_0,r), \\
  d\big(I(\pi_n^t)x,I(\pi_n^t)y\big) &\leq e^{t\omega_{\alpha(r,t)}}d(x,y)
  	\quad\mbox{for all } x,y\in B(x_0,r).
 \end{align*}
\end{lemma}
\begin{proof}
 Fix $n\in\N$. First, we show by induction that
 \begin{equation} \label{eq:iterate a}
  I(2^{-n})^k\colon B(x_0,r)\to B(x_0,\alpha(r,k2^{-n})) 
  \quad\mbox{for all } k\in\N \mbox{ and } r\geq 0.
 \end{equation}
 For $k=1$, the claim holds by Assumption~\ref{ass:I}(ii). For the induction step, suppose 
 that inequality~\eqref{eq:iterate a} holds for some fixed $k\in\N$ and all $r\geq 0$. We
 combine this with Assumption~\ref{ass:I}(ii) and inequality~\eqref{eq:alpha} to conclude
 that
 \[ I(2^{-n})^{k+1}x=I(2^{-n})^k I(2^{-n})x\in B\big(x_0,\alpha(\alpha(r,2^{-n}),k2^{-n})\big)
 	\subset B(x_0,\alpha(r,(k+1)2^{-n}) \]
 for all $r\geq 0$ and $x\in B(x_0,r)$.
 
 Second, we show by induction that, for all $k\in\N$, $r\geq 0$ and $x,y\in B(x_0,r)$,
 \begin{equation} \label{eq:iterate b}
  d\big(I(2^{-n})^k x,I(2^{-n})^k y\big)\leq e^{k2^{-n}\omega_{\alpha(r,k2^{-n})}}d(x,y)
 \end{equation}
 For $k=1$, the claim follows from Assumption~\ref{ass:I}(iii) and $\alpha(r,2^{-n})\geq r$. For the 
 induction step, suppose that inequality~\eqref{eq:iterate b} holds for some fixed $k\in\N$, all
 $r\geq 0$ and $x,y\in B(x_0,r)$. Combing this with Assumption~\ref{ass:I}(ii) and~(iii),
 inequality~\eqref{eq:alpha}, and the non-decreasingness of the mappings $r\mapsto\omega_r$ 
 and $t\mapsto\alpha(r,t)$ yields
 \begin{align*}
  d\big(I(2^{-n})^{k+1}x,I(2^{-n})^{k+1}y\big)
  &=d\big(I(2^{-n})^k I(2^{n})x,I(2^{-n})^k I(2^{-n})y\big) \\
  &\leq e^{k2^{-n}\omega_{\alpha(\alpha(r,2^{-n}),k2^{-n})}}d\big(I(2^{-n}x,I(2^{-n})y\big) \\
  &\leq e^{k2^{-n}\omega_{\alpha(r,(k+1)2^{-n})}}e^{2^{-n}\omega_{\alpha(r,2^{-n})}}d(x,y) \\
  &\leq e^{(k+1)2^{-n}\omega_{\alpha(r,(k+1)2^{-n})}}d(x,y)
 \end{align*}
 for all $r\geq 0$ and $x,y\in B(x_0,r)$.  
\end{proof}

By definition, for every $s,t\in\T$ and $n\in\N$ with $2^n s,2^n t\in\N$, 
\begin{equation} \label{eq:semigroup}
 I\big(\pi_n^{s+t}\big)=I\big(\pi_n^s\big)I\big(\pi_n^t\big). 
\end{equation}

\begin{lemma} \label{lem:iterate 2}
 Let $r\geq 0$ and $x\in B(x_0,r)\cap\L^I$. Choose $t_0>0$ and $c\geq 0$ such that
 \begin{equation} \label{eq:iterate 2a}
  d\big(I(t)x,x\big)\leq ct \quad\mbox{for all } t\in [0,t_0].
 \end{equation}
 Then, for every $T\geq 0$, $s,t\in [0,T]\cap\T$ and $n\in\N$ with $2^n s,2^n t\in\N$ 
 and $2^{-n}\leq t_0$, 
 \[ d\big(I(\pi_n^s)x,I(\pi_n^t)x\big)\leq ce^{T\omega_{\alpha(r,T)}}|s-t]. \]
\end{lemma}
\begin{proof}
 Fix $n\in\N$ with $2^{-n}\leq t_0$. First, we show by induction that 
 \begin{equation} \label{eq:iterate 2b}
  d\big(I(2^{-n})^k x,x\big)\leq ce^{k2^{-n}\omega_{\alpha(r,k2^{-n})}}k2^{-n}
  \quad\mbox{for all } k\in\N. 
 \end{equation}
 For $k=1$, the claim holds by inequality~\eqref{eq:iterate 2a}. For the induction step,
 assume that inequality~\eqref{eq:iterate 2b} holds for some fixed $k\in\N$. We combine
 this with Lemma~\ref{lem:iterate}, inequality~\eqref{eq:iterate 2a}, inequality~\eqref{eq:alpha}, 
 and the non-decreasingness of the mappings $r\mapsto\omega_r$ 
 and $t\mapsto\alpha(r,t)$ to estimate
 \begin{align*}
  &d\big(I(2^{-n})^{k+1}x,x\big) \\
  &\leq d\big(I(2^{-n})^k I(2^{-n})x,I(2^{-n})^k x\big)+d\big(I(2^{-n})^k x,x\big) \\
  &\leq e^{k2^{-n}\omega_{\alpha(\alpha(r,2^{-n}),k2^{-n})}}d\big(I(2^{-n})x,x\big)
  	+ce^{k2^{-n}\omega_{\alpha(r,k2^{-n})}}k2^{-n} \\
  &\leq ce^{k2^{-n}\omega_{\alpha(r,(k+1)2^{-n})}}e^{2^{-n}\omega_{\alpha(r,2^{-n})}}2^{-n} 
  	+ce^{k2^{-n}\omega_{\alpha(r,k2^{-n})}}k2^{-n} \\
  &\leq ce^{(k+1)2^{-n}\omega_{\alpha(r,(k+1)2^{-n})}}(k+1)2^{-n}.
 \end{align*}
 
 Second, we show that, for all $k,l\in\N$ with $k\geq l$,
 \[ d\big(I(2^{-n})^k x,I(2^{-n})^l x\big) 
 	\leq ce^{(k2^{-n}\omega_{\alpha(r,k2^{-n})}}(k-l)2^{-n}. \]
 It follows from  equation~\eqref{eq:semigroup}, Lemma~\ref{lem:iterate}, inequality~\eqref{eq:alpha},
 and the non-decreasingness of the mappings $r\mapsto\omega_r$  and $t\mapsto\alpha(r,t)$ that
 \begin{align*}
  d\big(I(2^{-n})^k x,I(2^{-n})^l x\big)
  &=d(I(2^{-n})^l I(2^{-n})^{k-l} x,I(2^{-n})^l x\big) \\
  &\leq e^{l2^{-n}\omega_{\alpha(\alpha(r,(k-l)2^{-n}),l2^{-n})}}
  	d\big(I(2^{-n})^{k-l}x,x\big) \\
  &\leq ce^{l2^{-n}\omega_{\alpha(r,k2^{-n})}}
  	e^{(k-l)2^{-n}\omega_{\alpha(r,(k-l)2^{-n})}}(k-l)2^{-n} \\
  &\leq ce^{k2^{-n}\omega_{\alpha(r,k2^{-n})}}(k-l)2^{-n}. \qedhere
 \end{align*}
\end{proof}

By using Assumption~\ref{ass:D} and choosing a diagonal sequence for the countable
set $\D\times\T$, there exists a subsequence $(n_l)_{l\in\N}\subset\N$ such that the limit
\begin{equation} \label{eq:def S}
 S(t)x:=\lim_{l\to\infty}I\big(\pi_{n_l}^t\big)x\in X
\end{equation}
exists for all $(x,t)\in\D\times\T$.

\begin{lemma} \label{lem:S lip} \Newline
 \begin{enumerate}
  \item For every $r\geq 0$ and $x\in B(x_0,r)\cap\D$, there exists $c\geq 0$ such that 
   \[ d\big(S(s)x,S(t)x\big)\leq ce^{T\omega_{\alpha(r,T)}}|s-t]
   	\quad\mbox{for all } T\geq 0 \mbox{ and } s,t\in [0,T]\cap\T. \]
   In particular, the mapping $S(\cdot)x\colon [0,T]\cap\T\to X$ has a unique continuous 
   extension to $[0,T]$, which satisfies the previous inequality for all $s,t\in [0,T]$.
  \item For every $r,t\geq 0$, 
   \[ d\big(S(t)x,S(t)y\big)\leq e^{t\omega_{\alpha(r,t)}}d(x,y)
 	\quad\mbox{for all } x,y\in B(x_0,r)\cap\D. \]
   In particular, the mapping $S(t)\colon B(x_0,r)\cap\D\to X$ has a unique continuous 
   extension to $B(x_0,r)$, which satisfies the previous inequality for all $x,y\in B(x_0,r)$.
 \end{enumerate}
\end{lemma}
\begin{proof}
 First, we fix $r,T\geq 0$, $x\in B(x_0,r)\cap\D$, and choose $t_0>0$ and $c\geq 0$ such that 
 \[ d\big(I(t)x,x\big)\leq ct \quad\mbox{for all } t\in [0,t_0]. \]
 It follows from equation~\eqref{eq:def S} and Lemma~\ref{lem:iterate 2} that,
 for all $s,t\in [0,T]\cap\T$,
 \[ d\big(S(s)x,S(t)x\big)=\lim_{l\to\infty}d\big(I(\pi^s_{n_l})x,I(\pi^t_{n_l})x\big)
 	\leq ce^{T\omega_{\alpha(r,T)}}|s-t]. \]
 The existence and uniqueness of the extension follows, because $[0,T]\cap\T\subset [0,T]$
 is dense and the mapping $S(\cdot)x\colon [0,T]\cap\T\to X$ is Lipschitz continuous.
 
 Second, we fix $r\geq 0$ and $x,y\in B(x_0,r)\cap\D$. It follows from equation~\eqref{eq:def S} 
 and Lemma~\ref{lem:iterate} that
 \begin{equation} \label{eq:S lip x}
  d\big(S(t)x,S(t)y\big)=\lim_{l\to\infty}\big(I(\pi_{n_l}^t)x,I(\pi_{n_l}^t)y\big)
  \leq e^{t\omega_{\alpha(r,t)}}d(x,y) \quad\mbox{for all } t\in\T.
 \end{equation}
 Now, let $t\geq 0$ be arbitrary and choose a sequence $(t_n)_{n\in\N}\subset [0,t]\cap\T$
 with $t_n\to t$. We use part~(i), inequality~\eqref{eq:S lip x} and the
 non-decreasingness of $\alpha$ in the second argument to estimate
 \[ d\big(S(t)x,S(t)y\big)
 	=\lim_{n\to\infty}d\big(S(t_n)x,S(t_n)y\big) 
 	\leq\sup_{n\in\N}e^{t_n\omega_{\alpha(r,t_n)}}d(x,y)
 	\leq e^{t\omega_{\alpha(r,t)}}d(x,y). \]
 The existence and uniqueness of the extension follows, because $B(x_0,r)\cap\D\subset B(x_0,r)$ 
 is dense and the mapping $S(t)\colon B(x_0,r)\cap\D\to X$ is Lipschitz continuous.
\end{proof}

\begin{lemma} \label{lem:S cont}
 The mapping $S(\cdot)x\colon [0,\infty)\to X$ is continuous for all $x\in X$.
\end{lemma}
\begin{proof}
 Let $x\in X$, $t\geq 0$ and $\epsilon>0$. Define $r:=d(x_0,x)+1$, $T:=t+1$ and choose 
 $\delta_1\in (0,1]$ with $2e^{T\omega_{\alpha(r,T)}}\delta_1<\nicefrac{\epsilon}{2}$.
 Since $\D\subset X$ is dense, there exists $y\in B(x_0,\delta_1)\cap\D$. Moreover, by 
 Lemma~\ref{lem:S lip}(i), there exists $c\geq 0$ such that
 \[ d\big(S(s_1)y,S(s_2)y\big)\leq e^{T\omega_{\alpha(r,T)}}|s_1-s_2|
 	\quad\mbox{for all }s_1,s_2\in [0,T]. \]
 Choose $\delta_2\in (0,\delta_1]$ with 
 $ce^{T\omega_{\alpha(r,T)}}\delta_2<\nicefrac{\epsilon}{2}$.
 For every $s\geq 0$ with $|s-t|<\delta_2$, we obtain
 \begin{align*}
  d\big(S(s)x,S(t)x\big)
  &\leq d\big(S(s)x,S(s)y\big)+d\big(S(t)x,S(t)y\big)+d\big(S(s)y,S(t)y\big) \\
  &\leq 2e^{T\omega_{\alpha(r,T)}}d(x,y)+ce^{T\omega_{\alpha(r,T)}}|s-t| \\
  &\leq 2e^{T\omega_{\alpha(r,T)}}\delta_1+ce^{T\omega_{\alpha(r,T)}}\delta_2
  	<\epsilon. \qedhere
 \end{align*}
\end{proof}

\begin{lemma} \label{lem:approx}
 It holds $S(t)x=\lim_{l\to\infty}I(\pi_{n_l}^t)x$ for all $x\in X$ and $t\in\T$. In particular, 
 we obtain $S(t)\colon B(x_0,r)\to B(x_0,\alpha(r,t))$ for all $r,t\geq 0$.
\end{lemma} 
\begin{proof}
 First, let $x\in X$, $t\in\T$, $\epsilon>0$ and define $r:=d(x_0,x)+1$. Choose $\delta>0$ 
 with $2e^{t\omega_{\alpha(r,t)}}\delta<\epsilon$ and $y\in B(x,\delta)\cap\D$. For
 every $l\in\N$ with $2^{n_l}t\in\N$, we use Lemma~\ref{lem:iterate} and 
 Lemma~\ref{lem:S lip}(ii) to estimate
 \begin{align*}
  d\big(S(t)x,I(\pi_{n_l}^t)x\big)
  &\leq d\big(S(t)x,S(t)y\big)+d\big(I(\pi_{n_l}^t)x,I(\pi_{n_l}^t)y\big)
  	+d\big(S(t)y,I(\pi_{n_l}^t)y\big) \\
  &\leq 2e^{t\omega_{\alpha(r,t)}}d(x,y)+d\big(S(t)y,I(\pi_{n_l}^t)y\big) \\
  &<\epsilon+d\big(S(t)y,I(\pi_{n_l}^t)y\big).
 \end{align*}
 Equation~\eqref{eq:def S} implies $\lim_{l\to\infty}d\big(S(t)x,I(\pi_{n_l}^t)x\big)=0$.
 
 Second, we fix $r\geq 0$ and $x\in B(x_0,r)$. It follows from the first part and
 Lemma~\ref{lem:iterate} that
 \begin{equation} \label{eq:approx}
  d\big(x_0,S(t)x\big)=\lim_{l\to\infty}d\big(x_0,I(\pi_{n_l}^t)x\big)\leq\alpha(r,t) 
  \quad\mbox{for all } t\in\T.
 \end{equation}
 Now, let $t\geq 0$ be arbitrary and choose a sequence $(t_n)_{n\in\N}\subset [0,t]\cap\T$ 
 with $t_n\to t$. We use Lemma~\ref{lem:S cont}, inequality~\eqref{eq:approx} and the 
 non-decreasingness of $\alpha$ in the second argument to conclude that
 \[ d\big(x_0,S(t)x\big)=\lim_{n\to\infty}d\big(x_0,S(t_n)x\big)
 	\leq\sup_{n\in\N}\alpha(r,t_n)\leq\alpha(r,t). \qedhere \]
\end{proof}

\begin{lemma}  \label{lem:semigroup} 
 It holds $S(0)=\id_X$ and $S(s+t)=S(s)S(t)$ for all $s,t\geq 0$. 
\end{lemma}
\begin{proof}
 It follows from Assumption~\ref{ass:I}(i) and the construction that $S(0)=\id_X$.
 Fix $x\in X$ and define $r:=d(x_0,x)$. First, let $s,t\in\T$. Equation~\eqref{eq:semigroup} 
 and Lemma~\ref{lem:approx} imply
 \[ \lim_{l\to\infty}d\big(S(s+t)x,I(\pi_{n_l}^s)I(\pi_{n_l}^t)x\big)
 	=\lim_{l\to\infty}d\big(S(s+t)x,I(\pi_{n_l}^{s+t})x\big)=0. \]
 Furthermore, it follows from Lemma~\ref{lem:iterate} and Lemma~\ref{lem:approx} 
 that
 \begin{align*}
  &d\big(S(s)S(t)x,I(\pi_{n_l}^s)I(\pi_{n_l}^t)x\big) \\
  &\leq d\big(S(s)S(t)x,I(\pi_{n_l}^s)S(t)x\big)
  	+d\big(I(\pi_{n_l}^s)S(t)x,I(\pi_{n_l}^s)I(\pi_{n_l}^t)x\big) \\
  &\leq d\big(S(s)S(t)x,I(\pi_{n_l}^s)S(t)x\big)
  	+e^{s\omega_{\alpha(\alpha(r,t),s)}}d\big(S(t)x,I(\pi_{n_l}^t\big)
  	\to 0 \quad\mbox{as } l\to\infty.
 \end{align*}
 
 Second, let $s,t\geq 0$ be arbitrary, define $T:=s+t+1$ and choose sequences
 $(s_n)_{n\in\N}$ and $(t_n)_{n\in\N}$ in $[0,T]\cap\T$ with $s_n\to s$ and $t_n\to t$.
 We use the first part, Lemma~\ref{lem:S lip}(ii), Lemma~\ref{lem:S cont} and
 Lemma~\ref{lem:approx} to estimate
 \begin{align*}
  &d\big(S(s+t)x,S(s)S(t)x\big) \\
  &\leq d\big(S(s+t)x,S(s_n+t_n)x\big)+d\big(S(s)S(t)x,S(s_n)S(t)x\big) \\
  	&\quad\; +d\big(S(s_n)S(t)x,S(s_n)S(t_n)x\big) \\
  &\leq d\big(S(s+t)x,S(s_n+t_n)x\big)+d\big(S(s)S(t)x,S(s_n)S(t)x\big) \\
  	&\quad\; +e^{T\omega_{\alpha(\alpha(r,T),T)}}d\big(S(t)x,S(t_n)x\big)
  	\to 0 \quad\mbox{as } n\to\infty. \qedhere
 \end{align*}
\end{proof}

\begin{lemma} \label{lem:S Lset}
 For every $r\geq 0$ and $x\in B(x_0,r)\cap\L^I$, there exists $c\geq 0$ such that 
 \[ d\big(S(s)x,S(t)x\big)\leq ce^{T\omega_{\alpha(r,T)}}|s-t]
 	\quad\mbox{for all } T\geq 0 \mbox{ and } s,t\in [0,T]. \]
\end{lemma}
\begin{proof}
 Fix $r,T\geq 0$, $x\in B(x_0,r)\cap\L^I$ and choose $t_0>0$ and $c\geq 0$ such that 
 \[ d\big(I(t)x,x\big)\leq ct \quad\mbox{for all } t\in [0,t_0]. \]
 It follows from Lemma~\ref{lem:approx} and Lemma~\ref{lem:iterate 2} that,
 for all $s,t\in [0,T]\cap\T$,
 \begin{equation} \label{eq:S Lset}
  d\big(S(s)x,S(t)x\big)=\lim_{l\to\infty}d\big(I(\pi^s_{n_l})x,I(\pi^t_{n_l})x\big)
  \leq ce^{T\omega_{\alpha(r,T)}}|s-t]. 
 \end{equation}
 Now, let $s,t\in [0,T]$ be arbitrary and choose sequences $(s_n)_{n\in\N}$ and 
 $(t_n)_{n\in\N}$ in $[0,T]\cap\T$ with $s_n\to s$ and $t_n\to t$. We use 
 Lemma~\ref{lem:S cont} and inequality~\eqref{eq:S Lset} to conclude that
 \[ d\big(S(s)x,S(t)x\big)=\lim_{n\to\infty}d\big(S(s_n)x,S(t_n)x\big)
 	\leq ce^{T\omega_{\alpha(r,T)}}|s-t]. \qedhere \]
\end{proof}

\subsection{Discussion and comparison with Nisio semigroup}
\label{sec:nisio}

In the previous subsection, we only considered dyadic partitions, but actually
the same iterations can be made with arbitrary partitions. The proofs do not
change, except for being notationally more complicated. For every $t\geq 0$, 
we denote by $P_t$ the set of all partitions $\pi=\{t_0,\ldots,t_n\}$ with
$0=t_0<t_1<\ldots<t_n=t$. Moreover, we define the iterated operators
\[ I(\pi):=I(t_1-t_0)\cdots I(t_n-t_{n-1}). \]
For later reference, we state the following version of Lemma~\ref{lem:iterate} 
and Lemma~\ref{lem:iterate 2}.

\begin{remark} \label{rem:pi}
 Let $(I(t))_{t\geq 0}$ be a family of operators, which satisfy Assumption~\ref{ass:I}.
 Then, the following statements hold:
 \begin{enumerate}
  \item For every $r,t\geq 0$ and $\pi\in P_t$,
   \begin{align*}
    d\big(x_0,I(\pi)x\big) &\leq\alpha(r,t) \quad\mbox{for all } x\in B(x_0,r), \\
    d\big(I(\pi)x,I(\pi)y\big) &\leq e^{t\omega_{\alpha(r,t)}}d(x,y)
    	\quad\mbox{for all } x,y\in B(x_0,r).
   \end{align*}
  \item Let $r\geq 0$ and $x\in B(0,r)\cap\L^I$. Choose $t_0>0$ and $c\geq 0$ with
   \[ d\big(I(t)x,x\big)\leq ct \quad\mbox{for all } t\in [0,t_0]. \]
   Then, for all $t\geq$ 0 and $\pi\in P_t$ with $|\pi|\leq t_0$,
   \[ d\big(I(\pi)x,x\big)\leq ce^{t\omega_{\alpha(r,t)}}t,\]
   where $|\pi|:=\max_{i=0,\ldots,n-1}(s_{i+1}-s_i)$ for $\pi=\{s_0,\ldots,s_n\}$.
 \end{enumerate}
\end{remark}

A priori the construction of an associated semigroup $(S(t))_{t\geq 0}$ to a 
generating family $(I(t))_{t\geq 0}$ depends on the choice of the partitions and 
the convergent subsequence. However, it is possible that 
 \[ S(t)x=\lim_{n\to\infty}I(\pi_n^t)x \quad\mbox{for all } (x,t)\in X\times\T, \]
i.e., the convergence holds without choosing a subsequence. For instance, if
the sequence $(I(\pi_n^t)x)_{n\in\N}$ is non-decreasing for all $(x,t)\in X\times\T$,
we obtain
 \[ S(t)x=\sup_{l\in\N}I(\pi_{n_l}^t)x=\sup_{n\in\N}I(\pi_n^t)x
   	\quad\mbox{for all } (x,t)\in X\times\T. \]
In particular, Nisio semigroups fall into this category, as we will see in the next lemma
and subsequent remark. Furthermore, if the semigroup represents the unique solution 
to a PDE, the construction is independent of the choice of the convergent subsequence.
For details, we refer to Subsection~\ref{sec:invariance}.
For the following lemma and subsequent remark, let $X$ be a Banach lattice.
An operator $I(t)\colon X\to X$ is called
\begin{itemize}
 \item \emph{monotone}, if $I(t)x\leq I(t)y$ for all $x,y\in X$ with $x\leq y$,
 \item \emph{continuous from below}, if $I(t)x=\sup_{n\in\N}I(t)x_n$ for every 
  non-decreasing sequence $(x_n)_{n\in\N}\subset X$ such that 
  $x:=\sup_{n\in\N}x_n \in X$ exists. 
\end{itemize}

\begin{lemma} \label{lem:nisio}
 Let $(I(t))_{t\geq 0}$ a family of operators $I(t)\colon X\to X$, which satisfy 
 Assumption~\ref{ass:I} and Assumption~\ref{ass:D}, and $(S(t))_{t\geq 0}$ 
 an associated semigroup as in Theorem~\ref{thm:construct}. In addition, we 
 make the following assumptions:
 \begin{enumerate}
  \item $I(t)$ is monotone and continuous from below for all $t\geq 0$.
  \item $I(s+t)x\leq I(s)I(t)x$ for all $s,t\geq 0$ and $x\in X$.
  \item The mapping $\R_+\to X,\; t\mapsto I(t)x$ is continuous for all $x\in X$.
  \item The operator $T(t)\colon X\to X,\; x\mapsto\sup_{\pi\in P_t}I(\pi)x$ is well-defined
   for all $t\geq 0$.
 \end{enumerate}
 Then, it holds $S(t)x=T(t)x$ for all $(x,t)\in  X\times\R_+$. Furthermore,
 \[ S(t)x=\lim_{n\to\infty}I(\pi_n^t)x \quad\mbox{for all } (x,t)\in X\times\T, \]
 i.e., the convergence holds without choosing a subsequence.
\end{lemma}
\begin{proof}
 By induction, it follows from condition~(ii) that the sequence $(I(\pi_n^t)x)_{n\in\N}$ is 
 non-decreasing for all $(x,t)\in X\times\T$. Moreover, by Theorem~\ref{thm:construct},
 there exists a subsequence $(n_l)_{l\in\N}\subset\N$ such that 
 $S(t)x=\lim_{l\to\infty}I(\pi_{n_l}^t)x$ for all $(x,t)\in X\times\T$. Since $X$ is a Banach 
 lattice, we obtain
 \[ I(t)x\leq S(t)x=\sup_{l\in\N}I(\pi_{n_l}^t)x=\sup_{n\in\N}I(\pi_n^t)x\leq T(t)x
 	\quad\mbox{for all } (x,t)\in X\times\T. \]
 In addition, condition~(iii) and strong continuity of $(S(t))_{t\geq 0}$ imply
 $I(t)x\leq S(t)x$ for all $(x,t)\in X\times\R_+$. We use the monotonicity of $I(s)$ 
 and the semigroup property of $(S(t))_{t\geq 0}$ to conclude
 \[ I(s)I(t)x\leq I(s)S(t)x\leq S(s)S(t)x=S(s+t)x 
 	\quad\mbox{for all } s,t\geq 0 \mbox{ and } x\in X. \]
 By induction, we obtain $T(t)x\leq S(t)x$ for all $(x,t)\in X\times\R_+$ with equality for
 $t\in\T$. It remains to show that the mapping $\R_+\to X,\; t\mapsto T(t)x$ is
 continuous for all $x\in X$. Condition~(ii) implies that the set $\{I(\pi)\colon\pi\in P_t\}$ 
 is directed upwards and, by assumption, the operator $I(t)$ is continuous from below
 for all $t\geq 0$. Hence, the family $(T(t))_{t\geq 0}$ forms a semigroup, and we can 
 use Remark~\ref{rem:pi}(ii) to show that the mapping $\R_+\to X,\; t\mapsto T(t)x$
 is locally Lipschitz continuous for all $x\in\L^I$. Since $\L^I\subset X$ is dense, it follows 
 from Remark~\ref{rem:pi}(i) that the mapping $t\mapsto T(t)x$ is continuous for all 
 $x\in X$, see the proof of Lemma~\ref{lem:S cont}. 
 Furthermore, it holds $S(t)x=\lim_{n\to\infty}I(\pi_n^t)x$ for all $(x,t)\in X\times\T$,
 because the limit along a subsequence in Theorem~\ref{thm:construct}(i) does not
 depend on the choice of the convergent subsequence.
\end{proof}

\begin{remark} \label{ex:nisio}
 Let $(S_\lambda)_{\lambda\in\Lambda}$ be a family of linear semigroups on $X$, which 
 satisfy the following conditions:
 \begin{enumerate}
  \item $S_\lambda(t)$ is monotone and continuous from below for all 
   $\lambda\in\Lambda$ and $t\geq 0$.
  \item There exists $\omega\geq 0$ such that $\|S_\lambda(t)x\|\leq e^{\omega t}\|x\|$
   for all $\lambda\in\Lambda$, $t\geq 0$ and $x\in X$.
  \item The operator $I(t)\colon X\to X,\; x\mapsto\sup_{\lambda\in\Lambda}S_\lambda(t)x$
   is well-defined for all $t\geq 0$. 
 \end{enumerate}
 Moreover, we assume that, for every subset $Y\subset X$ such that the supremum 
 $\sup Y\in X$ exists, it holds $\|\sup Y\|\leq\sup_{x\in Y}\|x\|$. For instance, the supremum 
 norm has this property, but not the $L^p$-norm. For every $t\geq 0$ and $x,y\in X$,
 we use the assumption on the norm and condition~(ii) to estimate
 \begin{align*}
  \|I(t)x\| &\leq\sup_{\lambda\in\Lambda}\|S_\lambda(t)x\|\leq e^{\omega t}\|x\|, \\
  \|I(t)x-I(t)y\| &\leq\sup_{\lambda\in\Lambda}\|S_\lambda(t)x-S_\lambda(t)y\| 
  	\leq e^{\omega t}\|x-y\|. 
 \end{align*}
 Hence, Assumption~\ref{ass:I} is satisfied. Furthermore, condition~(i) implies that 
 $(I(t))_{t\geq 0}$ satisfies the first two conditions of Lemma~\ref{lem:nisio}. In 
 many examples, the forth condition of Lemma~\ref{lem:nisio} follows from the assumptions, 
 which are already necessary for the construction of the semigroup $(S(t))_{t\geq 0}$,
 while the third condition requires a mild additional assumption. If $(I(t))_{t\geq 0}$ 
 satisfies the assumptions of Lemma~\ref{lem:nisio}, the associated semigroup 
 $(S(t))_{t\geq 0}$ from Theorem~\ref{thm:construct} equals the family $(T(t))_{t\geq 0}$, 
 defined by
 \[ T(t)x:=\sup_{\pi\in P_t}I(\pi)x \quad\mbox{for all } (x,t)\in X\times\R_+. \]
 It is also possible to weight the linear semigroups in the definition of $I(t)$ with a 
 penalization term. This leads to semigroups, which are not sublinear but merely convex.
\end{remark}

\section{Relative compactness based on Arz\'ela-Ascoli's theorem}
\label{sec:cont}

Let $C$ be the space of all continuous functions $f\colon\R^d\to\R^m$, including the 
subsets $C^\infty$, $\Lip$ and $C_0$ of all functions, which are infinitely differentiable, 
Lipschitz continuous and vanish at infinity, respectively. Furthermore, let $\L^\infty$ be the 
space of all bounded (not necessarily measurable) functions $f\colon\R^d\to\R^m$,
endowed with the supremum norm $\|f\|_\infty:=\sup_{x\in\R^d}|f(x)|$, where $|\cdot|$ 
denotes the Euclidean norm. We define $C_b:=C\cap\L^\infty$, $\Lip_b:=\Lip\cap\L^\infty$ 
and $\Lip_0:=\Lip\cap C_0$. In addition, for every $c\geq 0$, we denote by $\Lip(c)$ the 
set of all $c$-Lipschitz continuous functions. For every $c\geq 0$, let
\[ \Lip_b(c):=\{f\in\Lip_b\colon f\in\Lip(c),\, \|f\|_\infty\leq c\}
	\quad\mbox{and}\quad \Lip_0(c)=\Lip_b(c)\cap C_0. \]

\subsection{Semigroups on $C_0$}

We give explicit conditions, how to verify the assumptions of Section~\ref{sec:construct}
for a family of translation-invariant contractions, which will be illustrated in 
Section~\ref{sec:examples}. We start with an application of Arz\'ela-Ascoli's theorem.

\begin{lemma} \label{lem:compact} 
 Let $(I(t))_{t\geq 0}$ be a family of operators $I(t)\colon C_0\to C_0$, which satisfy 
 Assumption~\ref{ass:I}(ii). Let $t\in\T$ and $f\in C_0$ such that
 \begin{itemize}
  \item $\{I(\pi_n^t)f\colon n\in\N\}$ is equicontinuous, 
  \item $\lim_{|x|\to\infty}\sup_{n\in\N}|(I(\pi_n^t)f)(x)|=0$.
 \end{itemize}
 Then, the sequence $(I(\pi_n^t)f)_{n\in\N}$ is relatively compact in $C_0$.
\end{lemma}
\begin{proof}
 By assumption, the sequence $(I(\pi_n^t)_{n\in\N}$ is equicontinuous. Moreover,
 it follows from Lemma~\ref{lem:iterate} that $(I(\pi_n^t)_{n\in\N}$ is bounded. 
 Note that Lemma~\ref{lem:iterate} is a consequence of Assumption~\ref{ass:I}(ii)
 and independent of the other assumptions in Section~\ref{sec:construct}. By using 
 Arz\'ela-Ascoli's theorem and choosing a diagonal sequence, we obtain a function 
 $g\in C$ such that $I(\pi_{n_l}^t)f\to g$ as $l\to\infty$ uniformly on compact sets 
 for a suitable subsequence. It remains to show 
 \[ \lim_{l\to\infty}\|I(\pi_{n_l}^t)f-g\|_\infty=0. \]
 For every $\epsilon>0$, by assumption, there exists a compact set $K\subset\R^d$ with
 \[ \sup_{x\in K^c}\sup_{n\in\N}\left|(I(\pi_n^t)f)(x)\right|\leq\frac{\epsilon}{2}. \]
 This inequality is preserved in the limit, i.e., $\sup_{x\in K^c}|g(x)|\leq\nicefrac{\epsilon}{2}$.
 We obtain
 \begin{align*}
  \|I(\pi_{n_l}^t)f-g\|_\infty 
  &=\|(I(\pi_{n_l}^t)f-g)\one_K\|_\infty +\|(I(\pi_{n_l}^t)f-g)\one_{K^c}\|_\infty \\
  &\leq\|(I(\pi_{n_l}^t)f-g)\one_K\|_\infty+\epsilon
  \to\epsilon \quad\mbox{as } l\to\infty.
 \end{align*}
 In addition, since $C_0$ is complete, it holds $g\in C_0$.
\end{proof}

An operator $I(t)\colon C_0\to C_0$ is called \emph{translation-invariant}, if 
\[ \big(I(t)f\big)(x)=\big(I(t)f_x\big)(0) \quad\mbox{for all } f\in C_0 \mbox{ and } x\in\R^d, \]
where $f_x\colon\R^d\to\R^m,\; y\mapsto f(x+y)$. Let $C_0^+:=\{f\in C_0\colon f\geq 0\}$.

\begin{lemma} \label{lem:C0}
 Let $(I(t))_{t\geq 0}$ be a family of operators $I(t)\colon C_0\to \L^\infty$, which satisfy the 
 following conditions: 
 \begin{enumerate}
  \item $I(0)=\id_{C_0}$. 
  \item $\|I(t)f\|_\infty\leq\|f\|_\infty$ for all $t\geq 0$ and $f\in C_0$.
  \item $\|I(t)f-I(t)g\|_\infty\leq\|f-g\|_\infty$ for all $t\geq 0$ and $f,g\in C_0$.
  \item $I(t)$ is translation-invariant for all $t\geq 0$.
  \item There exists a countable set $\D\subset\Lip_0\cap\L^I$, which is dense in $C_0$.
  \item For every $c\geq 0$, there exists a family $(T_c(t))_{t\geq 0}$ of operators
   $T_c(t)\colon C_0^+\to C_0^+$ such that
   \begin{itemize}
    \item $|I(t)f|\leq T_c(t)|f|$ for all $f\in\Lip_0(c)$ and $t\geq 0$,
    \item $T_c(s)T_c(t)f\leq T_c(s+t)f$ for all $f\in C_0^+$ and $s,t\geq 0$,
    \item $T_c(t)$ is monotone for all $t\geq 0$. 
   \end{itemize}
 \end{enumerate} 
 Then, it holds $I(t)\colon C_0\to C_0$ and $I(t)\colon\Lip_0(c)\to\Lip_0(c)$ for all 
 $c,t\geq 0$. Furthermore, the family $(I(t))_{t\geq 0}$ satisfies Assumption~\ref{ass:I} and 
 Assumption~\ref{ass:D}. 
\end{lemma} 
\begin{proof}
 First, we show that $I(t)\colon\Lip_0(c)\to\Lip_0(c)$ for all $c,t\geq 0$. Fix $c,t\geq 0$
 and $f\in\Lip_0(c)$. For every $x,y\in\R^d$, properties~(iii) and~(iv) imply
 \[ |(I(t)f)(x)-(I(t)f)(y)|=|(I(t)f_x-I(t)f_y)(0)\|\leq\|f_x-f_y\|_\infty\leq c|x-y|,\]
 and therefore $I(t)f\in\Lip_b(c)$, by property~(i). Moreover, property~(vi) yields
 \[ \lim_{|x|\to\infty}|(I(t)f)(x)|\leq\lim_{|x|\to\infty}\big(T_c(t)|f|\big)(x)=0. \]
 We obtain $I(t)f\in\Lip_0(c)$ and conclude $I(t)\colon C_0\to C_0$, 
 because of $\overline{\Lip_0}=C_0$, $I(t)\colon C_0\to\L^\infty$ is Lipschitz 
 continuous by property~(iii), and $C_0\subset\L^\infty$ is closed.
  
 Second, Assumption~\ref{ass:I} is satisfied, because of the properties~(i)-(iii).
 Furthermore, by property~(v), there exists a countable set $\D\subset\Lip_0\cap\L^I$, 
 which is dense in $C_0$. It remains to verify the assumptions from Lemma~\ref{lem:compact}
 for all $(f,t)\in\D\times\T$. Fix $(f,t)\in\D\times\T$ and choose $c\geq 0$ with $f\in\Lip_0(c)$.
 By induction, it follows from $I(2^{-n})\colon\Lip_0(c)\to\Lip_0(c)$ that 
 $(I(\pi_n^t)f)_{n\in\N}\subset\Lip_0(c)$ for all $n\in\N$ with $2^nt\in\N$. In particular,
 the sequence  $(I(\pi_n^t)f)_{n\in\N}$ is equicontinuous. Furthermore, we use condition~(vi) 
 and $I(2^{-n})f\in\Lip_0(c)$ to estimate
 \[ |I(2^{-n})^2 f|\leq T_c(2^{-n})(|I(2^{-n})f|)\leq T_c(2^{-n})T_c(2^{-n})|f|
 	\leq T_c(2\cdot 2^{-n})|f|. \]
 By induction, it follows that $|I(\pi_n^t)f|\leq T_c(t)|f|$ for all $n\in\N$ with $2^nt\in\N$. 
 Hence, property~(vi) implies
 \[ \lim_{|x|\to\infty}\sup_{n\in\N}|(I(\pi_n^t)f)(x)|\leq\lim_{|x|\to\infty}\big(T_c(t)|f|\big)(x)=0. \]
 Lemma~\ref{lem:compact} yields that Assumption~\ref{ass:D} is satisfied.
\end{proof}

\subsection{Closure of Lipschitz functions and weighted norms}  
\label{sec:kappa}

To study examples, which are not translation-invariant, the space $(C_0,\|\cdot\|_\infty)$
is not a suitable choice. Thus, we modify the supremum norm with a weight function 
$\kappa$, following the setting of Nendel and R\"ockner~\cite{roecknen}. The verification
of Assumption~\ref{ass:D} becomes particularly simple, also in the translation-invariant
case.

Let $\kappa\colon\R^d\to(0,\infty)$ be a continuous function function, vanishing at infinity. 
Let $C_\kappa$ be the space of all continuous functions $f\colon\R^d\to\R^m$ such that 
$\|f\kappa\|_\infty<\infty$, endowed  with the norm $\|f\|_\kappa:=\|f\kappa\|_\infty$. Since 
the mapping 
\[ C_\kappa\to C_b,\; f\mapsto f\kappa \]
is an isomorphism, which is linear, isometric and preserves the pointwise order, the space
$C_\kappa$ is a Banach lattice.  Furthermore, we define $\UC_\kappa$ as the closure of 
$\Lip_b$ in $C_\kappa$. In particular, the space $\UC_\kappa$ is a Banach lattice.

\begin{lemma} \label{lem:compact2}
 Let $(I(t))_{t\geq 0}$ be a family of operators $I(t)\colon\UC_\kappa\to\UC_\kappa$.
 Assume that there exists a function $\rho\colon\R_+\times\R_+\to\R_+$ such that
 \begin{itemize}
  \item $I(t)\colon\Lip_b(c)\to\Lip_b(\rho(c,t))$ for all $c,t\geq 0$,
  \item $\rho(\rho(c,s),t)\leq\rho(c,s+t)$ for all $c,s,t\geq 0$. 
 \end{itemize}
 Then, the sequence $(I(\pi_n^t)f)_{n\in\N}$ is relatively compact in $\UC_\kappa$ 
 for all $f\in\Lip_b$ and $t\in\T$.
\end{lemma}
\begin{proof}
 First, we show that $\Lip_b(c)\subset\UC_\kappa$ is compact for all $c\geq 0$. 
 Let $c\geq 0$ and $(f_n)_{n\in\N}\subset\Lip_b(c)$ be a sequence. By using 
 Arz\'ela-Ascoli's theorem and choosing a diagonal sequence, we obtain a function 
 $f\in C$ such that $f_{n_l}\to f$ as $l\to\infty$ uniformly on compact sets for a
 suitable subsequence. Since $\Lip_b(c)$ is closed under mere pointwise convergence,
 it holds $f\in\Lip_b(c)$. It remains to show $\|f_{n_l}-f\|_\kappa\to 0$. Let 
 $\epsilon>0$ and choose a compact set $K\subset\R^d$ with 
 $\sup_{x\in K^c}\kappa(x)<\nicefrac{\epsilon}{2c}$. We obtain
 \begin{align*}
  \|f_{n_l}-f\|_\kappa
  &=\|(f_{n_l}-f)\one_K\|_\kappa+\|(f_{n_l}-f)\one_{K^c}\|_\infty \\
  &\leq\|(f_{n_l}-f)\one_K\|_\kappa+\epsilon
  \to\epsilon\quad\mbox{as } l\to\infty. 
 \end{align*}
 
 Second, let $f\in\Lip_b(c)$ for some $c\geq 0$ and $t\in\T$. By induction, it follows 
 from the assumptions on $I$ and $\rho$ that $I(\pi_n^t)f\in\Lip_b(\rho(c,t))$ for all 
 $n\in\N$ with $2^nt\in\N$. The first part yields that the sequence $(I(\pi_n^t)f)_{n\in\N}$ 
 is relatively compact in $\UC_\kappa$.
\end{proof}

Let $C_c^\infty$ be the space of all infinitely differentiable functions $f\colon\R^d\to\R^m$
with compact support.

\begin{lemma} \label{lem:dense}
 Assume that $\phi$ is infinitely differentiable. Then, the space $C^\infty_c\subset\UC_\kappa$ 
 is dense and the mapping $\phi\colon\UC_\kappa\to C_0,\; f\mapsto f\kappa$ is an 
 isomorphism. 
\end{lemma}
\begin{proof}
 It follows from $\lim_{|x|\to\infty}\kappa(x)=0$ and the continuity of $\kappa$ that
 $\phi(\Lip_b)\subset C_0$. We conclude $\phi(\UC_\kappa)\subset C_0$, because
 $\phi\colon\UC_\kappa\to C_b$ is isometric and $\Lip_b\subset\UC_\kappa$ is dense. 
 It remains to show $\phi(\UC_\kappa)=C_0$. Let $f\in C_0$ and choose a sequence 
 $(f_n)_{n\in\N}\subset C^\infty_c$ with $\|f-f_n\|_\infty\to 0$. Since $\kappa$ is 
 smooth, it holds $\nicefrac{f_n}{\kappa}\in C^\infty_c$ for all $n\in\N$. Furthermore, 
 the sequence $(\nicefrac{f_n}{\kappa})_{n\in\N}\subset\UC_\kappa$ is a Cauchy 
 sequence and the limit $g:=\lim_{n\to\infty}\nicefrac{f_n}{\kappa}\in\UC_\kappa$ exists,
 because $\phi$ is isometric. We obtain
 \[ \phi(g)=\lim_{n\to\infty}\phi\left(\frac{f_n}{\kappa}\right)=\lim_{n\to\infty}f_n=f. \]
 In addition, since $C_c^\infty\subset C_0$ is dense and $\phi$ is isometric, the set
 $\phi^{-1}(C_c^\infty)\subset C_c^\infty\subset\UC_\kappa$ is also dense. 
\end{proof}

\section{Infinitesimal generator}
\label{sec:gen}

Throughout this section, we assume that $X$ a Banach space with norm $\|\cdot\|$. 
We investigate the relation between the local behaviour of a generating family 
$(I(t))_{t\geq 0}$ and an associated semigroup $(S(t))_{t\geq 0}$. The technical 
condition~\eqref{eq:gen} in Theorem~\ref{thm:gen} will be discussed in 
Subsection~\ref{sec:eq gen}.

\begin{assumption} \label{ass:gen}
 Let $(I(t))_{t\geq 0}$ be a family of operators, which satisfy Assumption~\ref{ass:I}
 with $x_0:=0$, and $(S(t))_{t\geq 0}$ a strongly continuous semigroup on $X$. 
 In addition, we assume that 
 \begin{equation} \label{eq:norm n}
  \left\|\frac{S(t)x-x}{t}-y\right\|\leq\sup_{n\in\N}\left\|\frac{I(\pi_n^t)x-x}{t}-y\right\|
  \quad\mbox{for all } t\in\T\backslash\{0\} \mbox{ and } x,y\in X.
 \end{equation}
\end{assumption}

\begin{remark}
 If $(I(t))_{t\geq 0}$ satisfies both Assumption~\ref{ass:I} and Assumption~\ref{ass:D}, 
 and $(S(t))_{t\geq 0}$ is an associated semigroup as in Theorem~\ref{thm:construct},
 inequality~\eqref{eq:norm n} is clearly satisfied. But requiring norm convergence 
 $I(\pi_n^t)x\to S(t)x$ is an unnecessarily strong assumption for the next theorem. For instance,
 if $X$ is a space of continuous functions endowed with the supremum or $\kappa$-norm, 
 inequality~\eqref{eq:norm n} is satisfied if we have mere pointwise convergence 
 $I(\pi_{n_l}^t)f\to S(t)f$. In particular, Theorem~\ref{thm:gen} is applicable for Nisio 
 semigroups.
\end{remark}

\begin{theorem} \label{thm:gen}
 Suppose that Assumption~\ref{ass:gen} holds.
 Let $x,y\in X$ such that, for every $\epsilon>0$, there exists $t_0>0$ with
 \begin{equation} \label{eq:gen}
  \left\|\frac{I(2^{-n})^k(x+2^{-n}y)-I(2^{-n})^kx}{2^{-n}}-y\right\|\leq\epsilon 
  \quad\mbox{for all } k,n\in\N \mbox{ with } k2^{-n}\leq t_0.
 \end{equation}
 Then,
 \begin{equation} \label{eq:convI}
  \lim_{t\downarrow 0}\left\|\frac{I(t)x-x}{t}-y\right\|=0 \quad\mbox{implies}\quad
  \lim_{t\downarrow 0}\left\|\frac{S(t)x-x}{t}-y\right\|=0.
 \end{equation} 
\end{theorem}
\begin{proof}
 Fix $\epsilon>0$ and choose $r\geq 0$ with $x,y\in B(0,r)$. By assumption, there exists 
 $t_0\in(0,1]$ such that 
 \begin{equation} \label{eq:convI2}
  \left\|\frac{I(t)x-x}{t}-y\right\|\leq\frac{\epsilon}{2}e^{-\omega_{\alpha(2r,1)}}
  \quad\mbox{for all } t\in (0,t_0],
 \end{equation}
 and 
 \begin{equation} \label{eq:gen2}
  \left\|\frac{I(2^{-n})^k(x+2^{-n}y)-I(2^{-n})^kx}{2^{-n}}-y\right\|\leq\frac{\epsilon}{2}
  \quad\mbox{for all } k,n\in\N \mbox{ with } k2^{-n}\leq t_0.
 \end{equation}
 
 First, we show by induction that
 \begin{equation} \label{eq:ind}
  \left\|\frac{I(2^{-n})^kx-x}{k2^{-n}}-y\right\|\leq\epsilon 
  \quad\mbox{for all } k,n\in\N \mbox{ with } k2^{-n}\leq t_0. 
 \end{equation}
 For $k=1$, the claim holds by inequality~\eqref{eq:convI2}. To prove the induction step, 
 we assume for some fixed $k\in\N$ that
 \begin{equation} \label{eq:IA}
  \left\|\frac{I(2^{-n})^kx-x}{k2^{-n}}-y\right\|\leq\epsilon
  \quad\mbox{for all } n\in\N \mbox{ with } k2^{-n}\leq t_0. 
 \end{equation}
 Let $n\in\N$ with $(k+1)2^{-n}\leq t_0$. It holds
 \begin{align}
  &\frac{I(2^{-n})^{k+1}x-x}{(k+1)2^{-n}}-y \nonumber \\
  &=\frac{1}{k+1}\left(\frac{I(2^{-n})^kI(2^{-n})x-I(2^{-n})^kx}{2^{-n}}-y\right) 
  +\frac{k}{k+1}\left(\frac{I(2^{-n})^kx-x}{k2^{-n}}-y\right). \label{eq:IS1}
 \end{align}
 The first term is further decomposed as
 \begin{align}
  &\frac{I(2^{-n})^kI(2^{-n})x-I(2^{-n})^kx}{2^{-n}}-y \nonumber \\
  &=\frac{I(2^{-n})^kI(2^{-n})x-I(2^{-n})^k(x+2^{-n}y)}{2^{-n}}
  	+\frac{I(2^{-n})^k(x+2^{-n}y)-I(2^{-n})^kx}{2^{-n}}-y.  \label{eq:IS2}
 \end{align}     
 We use Lemma~\ref{lem:iterate} and inequality~\eqref{eq:convI2} to estimate
 \begin{align} 
  \left\|\frac{I(2^{-n})^kI(2^{-n})x-I(2^{-n})^k(x+2^{-n}y)}{2^{-n}}\right\|
  &\leq e^{\omega_{\alpha(2r,1)}}\left\|\frac{I(2^{-n})x-x}{2^{-n}}-y\right\| \nonumber \\
  &\leq e^{\omega_{\alpha(2r,1)}}\cdot\frac{\epsilon}{2}e^{-\omega_{\alpha(2r,1)}}
  	=\frac{\epsilon}{2}.\label{eq:IS3}
 \end{align}  
 Note that Lemma~\ref{lem:iterate} relies only on Assumption~\ref{ass:I}, but not
 on Assumption~\ref{ass:D}. Combining inequality~\eqref{eq:gen2}, equation~\eqref{eq:IS2}
 and inequality~\eqref{eq:IS3} yields
 \begin{equation} \label{eq:IS4}
  \left\|\frac{I(2^{-n})^kI(2^{-n})x-I(2^{-n})^kx}{2^{-n}}-y\right\|\leq\epsilon
 \end{equation}
 Furthermore, it follows from inequality~\eqref{eq:IA}, equation~\eqref{eq:IS1} and 
 inequality~\eqref{eq:IS4} that
 \begin{align*}
  &\left\|\frac{I(2^{-n})^{k+1}x-x}{(k+1)2^{-n}}-y\right\| \\
  &\leq\frac{1}{k+1}\left\|\frac{I(2^{-n})^kI(2^{-n})x-I(2^{-n})^kx}{2^{-n}}-y\right\|
	+\frac{k}{k+1}\left\|\frac{I(2^{-n})^kx-x}{k2^{-n}}-y\right\|\leq\epsilon.
 \end{align*}
 
 Second, we show that the right-hand side of equation~\eqref{eq:convI} holds.
 Inequality~\eqref{eq:norm n} and inequality~\eqref{eq:ind} imply
 \[ \left\|\frac{S(t)x-x}{t}-y\right\|\leq\sup_{n\in\N}\left\|\frac{I(\pi_n^t)x-x}{t}-y\right\|
 	\leq\epsilon \quad\mbox{for all } t\in (0,t_0]\cap\T. \]
 Now, let $t\in (0,t_0]$ be arbitrary and choose a sequence $(t_n)_{n\in\N}\subset (0,t_0]\cap\T$ 
 with $t_n\to t$. Since $(S(t))_{t\geq 0}$ is strongly continuous, we obtain
 \[ \left\|\frac{S(t)x-x}{t}-y\right\|=\lim_{n\to\infty}\left\|\frac{S(t_n)x-x}{t_n}-y\right\|
 	\leq\epsilon. \qedhere \]
\end{proof}

\subsection{Condition~\eqref{eq:gen}}
\label{sec:eq gen}

If $I(t)$ is convex and monotone for all $t\geq 0$, inequality~\eqref{eq:gen} is always 
satisfied. Furthermore, we will see in Subsection~\ref{sec:rde} an example, where $I(t)$ 
has none of these two properties.

\begin{lemma} \label{lem:gen convex}
 Let $X$ be a Banach lattice and $(I(t))_{t\geq 0}$ be a family of convex monotone
 operators $I(t)\colon X\to X$, which satisfy Assumption~\ref{ass:I} with $x_0:=0$. 
 Furthermore, let $\L^I\subset X$ be dense. Then, condition~\eqref{eq:gen} holds 
 for all $x,y\in X$. 
\end{lemma}
\begin{proof}
 We argue similar as in the proof of~\cite[Proposition 3.9]{roecknen}.
 Let $x,y\in X$ and $k,n\in\N$. Since the operator $I(2^{-n})^k$ is convex, the 
 mapping
 \[ \R\to X,\; \lambda\mapsto I(2^{-n})^k(x+\lambda y)-I(2^{-n})^k x \]
 is convex and maps zero to zero. This implies
 \begin{align*}
  -I(2^{-n})^k(x-y)+I(2^{-n})^k x-y 
  &\leq\frac{I(2^{-n})^k(x+\lambda y)-I(2^{-n})^k x}{\lambda}-y \\
  &\leq I(2^{-n})^k(x+y)- I(2^{-n})^k x-y \quad\mbox{for all } \lambda\in (0,1].
 \end{align*}
 Hence, for $\lambda:=2^{-n}$, we obtain
 \begin{align*}
  &\left\|\frac{I(2^{-n})^k(x+2^{-n}y)-I(2^{-n})^k x}{2^{-n}}-y\right\| \\
  &\leq\|I(2^{-n})^k(x+y)-(x+y)\|+\|I(2^{-n})^k(x-y)-(x-y)\|+\|I(2^{-n})^k x-x\|.
 \end{align*}
 It remains to show
 \[ \lim_{t\downarrow 0}\sup_{n\in\N}\|I(\pi_n^t)x-x\|=0 \quad\mbox{for all } x\in X. \]
 Let $x\in X$ and $\epsilon>0$. We define $r:=\|x\|+1$ and choose $\delta\in (0,1]$
 with 
 \begin{equation} \label{eq:gen lim}
  (e^{\omega_{\alpha(r,1)}}+1)\delta\leq\frac{\epsilon}{2}.
 \end{equation}
 Since $\L^I\subset X$ is dense, there exists $y\in B(x,\delta)\cap\L^I$. By Lemma~\ref{lem:iterate 2},
 there exists $c\geq 0$ with
 \[ \|I(\pi_n^t)y-y\|\leq ce^{t\omega_{\alpha(r,t)}}t \quad\mbox{for all } t\geq 0. \]
 Let $t_0>0$ such that 
 \begin{equation} \label{eq:gen lim2}
  ce^{t_0\omega_{\alpha(r,t_0)}}t_0\leq\frac{\epsilon}{2}.
 \end{equation}
 We use Lemma~\ref{lem:iterate}, inequality~\eqref{eq:gen lim}, inequality~\eqref{eq:gen lim2}
 and the non-decreasingness of $\alpha$ in the second argument to conclude
 \begin{align*}
  \|I(\pi_n^t)x-x\|
  &\leq\|I(\pi_n^t)x-I(\pi_n^t)y\|+\|x-y\|+\|I(\pi_n^t)y-y\| \\
  &\leq\big(e^{t\omega_{\alpha(r,t)}}+1\big)\|x-y\|+ce^{t\omega_{\alpha(r,t)}}t 
  	\leq\epsilon 
 \end{align*}
 for all $t\in [0,t_0]\cap\T$ and $n\in\N$ with $2^n t\in\N$. Note that Lemma~\ref{lem:iterate}
 and Lemma~\ref{lem:iterate 2} rely only on Assumption~\ref{ass:I}, but not Assumption~\ref{ass:D}.
\end{proof}

\begin{remark}
 The previous result also holds if $I(t)$ is linear for all $t\geq 0$ without assuming 
 the monotonicity, because linearity of $I(2^{-n})^k$ implies
 \[ \frac{I(2^{-n})^k(x+2^{-n}y)-I(2^{-n})^k x}{2^{-n}}-y=I(2^{-n})^k y-y \]
 for all $x,y\in X$ and $k,n\in\N$. 
\end{remark}

\subsection{Invariance of the domain and uniqueness}
\label{sec:invariance}

Let $(S(t))_{t\geq 0}$ be a strongly continuous semigroup. Furthermore,
we assume that, for every $r,T\geq 0$, there exists $c\geq 0$ such that
\begin{equation} \label{eq:inv lip}
 \|S(t)x-S(t)y\|\leq c\|x-y\| \quad\mbox{for all } t\in [0,T] \mbox{ and } x,y\in B(0,r).
\end{equation}
The local behaviour of the semigroup $(S(t))_{t\ge 0}$ is determined through the 
infinitesimal generator
\begin{equation*} 
 A\colon D(A)\to X,\; x\mapsto\lim_{t\downarrow 0}\frac{S(t)x-x}{t},
\end{equation*}
where the domain $D(A)$ consists of all $x\in X$ for which previous limit exists.

\begin{lemma} 
 For every $x\in D(A)$ and $t\ge 0$,
 \begin{equation} \label{eq:inv}
  S(t)x\in D(A) \quad\Longleftrightarrow\quad 
  \lim_{h\downarrow 0}\frac{S(t)(x+hAx)-S(t)x}{h} \quad\mbox{exists}. 
 \end{equation}
\end{lemma}
\begin{proof}
 Fix $x\in D(A)$ and $t\geq 0$. For every $h>0$, 
 \[ \frac{S(h)S(t)x-S(t)x}{h}
 	=\frac{S(t)S(h)x-S(t)(x+hAx)}{h}+\frac{S(t)(x+hAx)-S(t)x}{h}. \] 
 It follows from inequality~\eqref{eq:inv lip} and the definition of the 
 generator that
 \[ \lim_{h\downarrow 0}\frac{S(t)S(h)x-S(t)(x+hAx)}{h}=0. \]
 Hence, the claimed equivalence holds by definition of the generator.
\end{proof}

\begin{remark} \label{rem:gen}
 Let $X$ be a Banach lattice and $(S(t))_{t\geq 0}$ a semigroup 
 of convex monotone operators. Then, the quotient on the right hand side 
 of~\eqref{eq:inv} is non-decreasing in $h>0$ and bounded from below.
 Hence, if the norm is order continuous, the limit on the right hand side of~\eqref{eq:inv} 
 exists, and the domain is invariant. For details, we refer to~\cite{denk2019convex}. 
 Typical examples are $L^p$-spaces and Orlicz hearts, see~\cite{wnuk1999banach},
 while spaces of continuous functions with the supremum or $\kappa$-norm do not have this 
 property. Thus, the domain of a nonlinear semigroup is in general not invariant,
 see~\cite{denk2020convex} for a counter example. One possibility to overcome
 this problem is the extension of the semigroup to a space with order continuous norm,
 see~\cite{BK21}. Another one is to weaken the definition of the generator by using
 monotone convergence, see~\cite{denk2020convex}. One can also rely on viscosity 
 solutions, see~\cite{Crandall-Ishii-Lions92,dkn2,roecknen}. Finally, we want to mention
 an upcoming paper, where we use $\Gamma$-convergence to study the generator
 on the \emph{symmetric Lipschitz set}, which will be introduced in Section~\ref{sec:Lip}.
\end{remark}

The same arguments as in~\cite[Theorem 3.5]{denk2019convex} lead to the following 
uniqueness result. For the sake of completeness, we provide a proof.

\begin{theorem} \label{thm:CP}
 Let $x\in X$ and $y\colon\R_+\to X$ be a continuous function with $y(0)=x$ and 
 $y(t)\in D(A)$ for all $t\geq 0$. Furthermore, we assume that
 \[ \lim_{h\downarrow 0}\frac{y(t+h)-y(t)}{h}= Ay(t) 
 	\quad\mbox{for all } t\geq 0,\]
 where the existence of the limit is part of the assumption.Then, it holds $y(t)=S(t)x$
  for all $t\geq 0$.	
\end{theorem}
\begin{proof}
 We fix $t\geq 0$ and define $g\colon [0,t]\to X,\; s\mapsto S(t-s)y(s)$. First, we show 
 \[ \lim_{h\downarrow 0}\frac{g(s+h)-g(s)}{h}=0
 	\quad\mbox{for all } s\in [0,t]. \]
 For every $s\in [0,t]$ and $h\in (0,t-s]$, 
 \[ \frac{g(s+h)-g(s)}{h}=\frac{S(t-s-h)y(s+h)-S(t-s-h)S(h)y(s)}{h}. \]
 By assumption, it holds
 \[ \frac{y(s+h)-S(h)y(s)}{h} =\frac{y(s+h)-y(s)}{h}-\frac{S(h)y(s)-y(s)}{h} \to 0
 	\quad\mbox{as }h\downarrow 0. \]
 Hence, it follows from inequality~\eqref{eq:inv lip} that 
 \[ \lim_{h\downarrow 0}\frac{g(s+h)-g(s)}{h}=0. \]
 
 Second, we show that $g$ is continuous. We have already established that $g$ is right
 continuous, i.e., we only have to show left continuity. For every $s\in [0,t]$, continuity of 
 $y$, strong continuity of $S$ and inequality~\eqref{eq:inv lip} imply
 $y(s)=\lim_{h\downarrow 0}S(h)y(s-h)$. Hence, it follows from inequality~\eqref{eq:inv lip}
 that
 \[ g(s-h)-g(s)=S(t-s)S(h)y(s-h)-S(t-s)y(s)\to 0\quad\mbox{as }h\downarrow 0. \] 
 
 Third, following the proof of~\cite[Lemma~1.1 in Section~2]{pazy2012semigroups},
  one can show that every continuous function with vanishing right derivative 
 is constant. In particular, we obtain $y(t)=g(t)=g(0)=S(t)x$. 
\end{proof}

\begin{remark} \label{rem:CP}
 If the domain is invariant and dense, the semigroup $(S(t))_{t\geq 0}$ is uniquely 
 determined through its generator. In general, we do not know if the left derivative of 
 the function $y$ in the previous theorem exists and the abstract Cauchy problem 
 \[ y'(t)=Ay(t) \quad\mbox{for all } t\geq 0, \quad y(0)=x, \]
 has classical solution. But we know that there exists at most one solution, and if 
 the solution exists it depends locally Lipschitz continuous on the initial value $x$.
 If the norm is order continuous, the existence of a solution is also known,
 see~\cite{denk2019convex}.
\end{remark}

\section{Symmetric Lipschitz set}
\label{sec:Lip}

Throughout this section,  let $(S(t))_{t\geq 0}$ and $(S^\pm(t))_{t\geq 0}$ be three 
semigroups on a Banach lattice $X$. If we choose $S^+(t):=S(t)$ and $S^-(t)x:=-S(t)(-x)$ 
for all $t\geq 0$ and $x\in X$, the set
\[ \L^S_{\sym}:=\L^{S^+}\cap\L^{S^-}=\{x\in\L^S\colon -x\in\L^S\} \]
is called \emph{symmetric Lipschitz set} of $(S(t))_{t\geq 0}$. In examples, this set 
can be determined explicitly, see Subsection~\ref{sec:gexp}. Furthermore, by defining 
the generator w.r.t a weaker norm or pointwise almost everywhere, for elements of 
the symmetric Lipschitz set, the generator can be determined explicitly, 
see~\cite{BK21,denk2020convex}.  In order to solve the corresponding Cauchy 
problem for all positive times, it is crucial to show that the symmetric Lipschitz
set is invariant under the semigroup. Compared to the invariance of the domain, 
the invariance of the symmetric Lipschitz set does not depend on the underlying 
space, i.e., it also holds for spaces of continuous function.

\begin{theorem} \label{thm:Lsym}
 Assume that, for all $s,t\geq 0$ and $x\in X$, 
 \begin{itemize}
  \item $S^-(t)x\leq S(t)x\leq S^+(t)x$,  
  \item $S(s)S^-(t)x\leq S^-(t)S(s)x$ and $S^+(s)S(t)x\leq S(t)S^+(s)x$.
 \end{itemize} 
 Furthermore, for every $r,T\geq 0$, there exists $c\geq 0$ such that, for all $t\in [0,T]$ 
 and $x,y\in B(0,r)$,
 \begin{equation} \label{eq:Lip}
  \|S(t)x-S(t)y\|\leq c\|x-y\| \quad\mbox{ and }\quad 
  \|S^\pm(t)x-S^\pm(t)y\|\leq c\|x-y\|. 
 \end{equation}
 Then, it holds $\L^{S^+}\cap\L^{S^-}\subset\L^S$ 
 and $S(t)\colon\L^{S^+}\cap\L^{S^-}\to\L^{S^+}\cap\L^{S^-}$ for all $t\geq 0$.  
\end{theorem}
\begin{proof}
 First, we show $\L^{S^+}\cap\L^{S^-}\subset\L^S$. Let $x\in\L^{S^+}\cap\L^{S^-}$. 
 By definition, there exist $t_0>0$ and $c\geq 0$ such that
 \[ \|S^\pm(t)x-x\|\leq ct \quad\mbox{for all } t\in [0,t_0]. \]
 Furthermore, by assumption, it holds
  \[ S^-(t)x-x\leq S(t)x-x\leq S^+(t)x-x \quad\mbox{for all } t\geq 0. \]
 We obtain the estimate
 \[ \|S(t)x-x\|\leq\max\big\{\|S^+(t)x-x\|,\|S^-(t)x-x\|\big\}\leq ct
 	\quad\mbox{for all } t\in [0,t_0]. \]
 	
 Second, we show $S(t)\colon\L^{S^+}\cap\L^{S^-}\to\L^{S^+}\cap\L^{S^-}$ for all 
 $t\geq 0$. Let $x\in\L^{S^+}\cap\L^{S^-}$ and $t\geq 0$. By definition and the first
 part, there exist $t_0>0$ and $c\geq 0$ such that
 \begin{equation} \label{eq:Lsym}
  \|S(t)x-x\|\leq ct \quad\mbox{and}\quad \|S^\pm(t)x-x\|\leq ct 
  \quad\mbox{for all } t\in [0,t_0].
 \end{equation}
 It follows from the assumptions of the theorem that, for all $s,t\geq 0$,
 \begin{align*}
  S(s)S^-(t)x-S(s)x &\leq S^-(t)S(s)x-S(s)x \leq S(t)S(s)x-S(s)x, \\
  S(t)S(s)x-S(s)x &\leq S^+(t)S(s)x-S(s)x \leq S(s)S^+(t)x-S(s)x.
 \end{align*}
 We use inequality~\eqref{eq:Lip} and inequality~\eqref{eq:Lsym} 
 to estimate
 \begin{align*}
  \|S^-(t)S(s)x-S(s)x\| 
  &\leq\max\big\{\|S(s)S(t)x-S(s)x\|,\|S(s)S^-(t)x-S(s)x\|\big\} \\
  &\leq c_1\max\big\{\|S(t)x-x\|,\|S^-(t)x-x\|\big\} \leq cc_1 t, \\
  \|S^+(t)S(s)x-S(s)x\| 
  &\leq\max\big\{\|S(s)S(t)x-S(s)x\|,\|S(s)S^+(t)x-S(s)x\|\big\} \\
  &\leq c_1\max\big\{\|S(t)x-x\|,\|S^+(t)x-x\|\big\} \leq cc_1 t
 \end{align*} 
 for all $s\in [0,t_0]$ and $t\geq 0$. Here, $c_1$ is a constant such that 
 inequality~\eqref{eq:Lip} holds with $T:=t_0$ and $r:=\max\{\|S(t)x\|,\|S^\pm(t)x\|\}$.
\end{proof}

The following assumption on the families $(I(t))_{t\geq 0}$ and $(I^\pm(t))_{t\geq 0}$ 
ensures that, if existing, the associated semigroups $(S(t))_{t\geq 0}$ and 
$(S^\pm(t))_{t\geq 0}$ satisfy the assumptions of Theorem~\ref{thm:Lsym}. 
Furthermore, they allow us to study the relation between the Lipschitz sets of 
the generating families and the associated semigroups.

\begin{assumption} \label{ass:Lset}
 Let $(I(t))_{t\geq 0}$ and $(I^\pm(t))_{t\geq 0}$ be three families of monotone 
 operators on $X$, which satisfy Assumption~\ref{ass:I} with $x_0:=0$. We 
 assume, for all $s,t\geq 0$ and $x\in X$,
 \begin{enumerate}
  \item $I^-(t)x\leq I(t)x\leq I^+(t)x$,
  \item $I(s)I^-(t)x\leq I^-(t)I(s)x$ and $I^+(s)I(t)x\leq I(t)I^+(s)x$,
  \item $I(s+t)x\leq I(s)I(t)$,
  \item $I^+(t)$ is continuous from below.
 \end{enumerate}
 In addition, we assume, for all $(x,t)\in X\times\R_+$,
 \[ S(t)x=\sup_{\pi\in P_t}I(\pi)x, \quad S^+(t)x=\sup_{\pi\in P_t}I^+(\pi)x
 	 \quad\mbox{and}\quad S^-(t)=\inf_{\pi\in P_t}I^-(\pi)x. \]
\end{assumption}

\begin{theorem} \label{thm:Lsym2}
 Suppose that Assumption~\ref{ass:Lset} is satisfied. Then, 
 \[ \L^{I^+}\cap\L^{I^-}=\L^{S^+}\cap\L^{S^-}. \]
 Furthermore, it holds $S(t)\colon\L^{S^+}\cap\L^{S^-}\to\L^{S^+}\cap\L^{S^-}$ 
 for all $t\geq 0$. 
\end{theorem}
\begin{proof}
 W.l.o.g. we assume that $(I(t))_{t\geq 0}$ and $(I^\pm(t))_{t\geq 0}$ satisfy 
 Assumption~\ref{ass:I} with the same function $\alpha$ and the same constant 
 $\omega_r$ for all $r\geq 0$. First, we show that 
 $\L^{I^+}\cap\L^{I^-}=\L^{S^+}\cap\L^{S^-}$. Let $x\in\L^{I^+}\cap\L^{I^-}$. 
 By definition, there exist $t_0>0$ and $c\geq 0$ such that
 \[ \|I^\pm(t)x-x\|\leq ct \quad\mbox{for all } t\in [0,t_0]. \]
 It follows from Assumption~\ref{ass:Lset} and Remark~\ref{rem:pi}(ii) that
 \[ \|S^\pm(t)x-x\|\leq\sup_{\pi\in P_t}\|I^\pm(\pi)x-x\|
 	\leq ce^{t\omega_{\alpha(r,t)}}t \quad\mbox{for all } t\geq 0,\]
 where $r:=\|x\|$. On the other hand, let $x\in\L^{S^+}\cap\L^{S^-}$. By definition, 
 there exist $t_0>0$ and $c\geq 0$ such that
 \[ \|S^\pm(t)x-x\|\leq ct \quad\mbox{for all } t\in [0,t_0]. \]
 Assumption~\ref{ass:Lset} implies
 \[ S^-(t)x-x\leq I^-(t)x-x\leq I^+(t)x-x\leq S^+(t)x-x, \]
 and therefore 
 \[ \|I^\pm(t)x-x\|\leq\max\big\{\|S^+(t)x-x\|,\|S^-(t)x-x\|\big\}\leq ct 
 	\quad\mbox{for all } t\in [0,t_0]. \]
 
 Second, we show $S^-(t)x\leq S(t)x\leq S^+(t)x$ for all $(x,t)\in X\times\R_+$.
 It follows immediately from Assumption~\ref{ass:Lset} that 
 $S^-(t)x\leq I^-(t)x\leq I(t)x\leq S(t)x$. Furthermore, Assumption~\ref{ass:Lset}(i)
 and the monotonicity of $I(s)$ imply
 \[ I(s)I(t)x\leq I(s)I^+(t)x\leq I^+(s)I^+(t)x
 	\quad\mbox{for all } s,t\geq 0 \mbox{ and } x\in X. \]
 By Assumption~\ref{ass:Lset} and induction, we obtain 
 \[ S(t)x=\sup_{\pi\in P_t}I(\pi)x\leq\sup_{\pi\in P_t}I^+(\pi)x=S^+(t)x
 	\quad\mbox{for all } (x,t)\in X\times\R_+. \]
 	
 Third, we show $S(s)S^-(t)x\leq S^-(t)S(s)x$ and $S^+(s)S(t)x\leq S(t)S^+(s)x$
 for all $s,t\geq 0$ and $x\in X$. We use Assumption~\ref{ass:Lset}(ii) and the 
 monotonicity of $(I(t))_{t\geq 0}$ and $(I^-(t))_{t\geq 0}$ to estimate
 \[ I(s_1)I(s_2)I^-(t_1)I^-(t_2)\leq I^-(t_1)I^-(t_2)I(s_1)I(s_2) \]
 for all $s_1,s_2,t_1,t_2\geq 0$. It follows by induction that
 \[ I(\pi_s)I^-(\pi_t)x\leq I^-(\pi_t)I(\pi_s)x \]
 for all $s,t\geq 0$, $\pi_s\in P_s$, $\pi_t\in P_t$ and $x\in X$. Hence, 
 Assumption~\ref{ass:Lset} and the monotonicity of $I(\pi_s)$ and $I^-(\pi_t)$ 
 imply
 \begin{align*}
  S(s)S^-(t)x
  &=\sup_{\pi_s\in P_s}I(\pi_s)\Big(\inf_{\pi_t\in P_t}I^-(\pi_t)x\Big) 
  \leq\sup_{\pi_s\in P_s}\inf_{\pi_t\in P_t}I(\pi_s)I^-(\pi_t)x \\
  &\leq\sup_{\pi_s\in P_s}\inf_{\pi_t\in P_t}I^-(\pi_t)I(\pi_s)x 
  \leq\inf_{\pi_t\in P_t}\sup_{\pi_s\in P_s}I^-(\pi_t)I(\pi_s)x \\
  &\leq\inf_{\pi_t\in P_t}I^-(\pi_t)\Big(\sup_{\pi_s\in P_s}I(\pi_s)\Big)
  =S^-(t)S(s)x.
 \end{align*}
 Similarly, we use Assumption~\ref{ass:Lset}(ii) and the monotonicity of 
 $(I(t))_{t\geq 0}$ and $(I^+(t))_{t\geq 0}$ to obtain
 \[ I^+(\pi_s)I(\pi_t)x\leq I(\pi_t)I^+(\pi_s)x \]
 for all $s,t\geq 0$, $\pi_s\in P_s$, $\pi_t\in P_t$ and $x\in X$. Moreover, 
 Assumption~\ref{ass:Lset}(iii) implies that the set $\{I(\pi)\colon\pi\in P_t\}$ is 
 directed upwards for all $t\geq 0$, and Assumption~\ref{ass:Lset}(iv) ensures 
 that $I^+(\pi)$ is continuous from below for all $s\geq 0$ and $\pi\in P_s$. We 
 combine this with the monotonicity of $I(\pi_t)$ to conclude
 \begin{align*}
  S^+(s)S(t)
  &=\sup_{\pi_s\in P_s}I^+(\pi_s)\Big(\sup_{\pi_t\in P_t}I(\pi_t)x\big)
  =\sup_{\pi_s\in P_s}\sup_{\pi_t\in P_t}I^+(\pi_s)I(\pi_t)x \\
  &\leq\sup_{\pi_t\in P_t}\sup_{\pi_s\in P_s}I(\pi_t)I^+(\pi_s)x 
  \leq\sup_{\pi_t\in P_t}I(\pi_t)\Big(\sup_{\pi_s\in P_s}I^+(\pi_s)x\Big)
  =S(t)S^+(s)x
 \end{align*}
 for all $s,t\geq 0$ and $x\in X$.
 
 Fifth, inequality~\eqref{eq:Lip} follows from Assumption~\ref{ass:I} and 
 Remark~\ref{rem:pi}(i). Hence, we can apply Theorem~\ref{thm:Lsym} and 
 obtain $S(t)\colon\L^{S^+}\cap\L^{S^-}\to\L^{S^+}\cap\L^{S^-}$ for all 
 $t\geq 0$.
\end{proof}

\begin{remark}
 Let $(I(t))_{t\geq 0}$ be a family of operators $I(t)\colon X\to X$, which satisfy
 Assumption~\ref{ass:I} with $x_0:=0$ and Assumption~\ref{ass:D}. We define 
 $I^+(t):=I(t)$ and $I^-(t)x:=-I(t)(-x)$ for all $(x,t)\in X\times\R_+$. Then, the families
 $(I^\pm(t))_{t\geq 0}$ satisfy Assumption~\ref{ass:I} with $x_0=0$. Furthermore, 
 Assumption~\ref{ass:D} is satisfied, for example, if the set $\D$ can be chosen 
 symmetric, i.e. $\D=\{-x\colon x\in\D\}$. Let $(S(t))_{t\geq 0}$ and $(S^\pm(t))_{t\geq 0}$ 
 be associated semigroups as in Theorem~\ref{thm:construct}. Then, by construction, it holds 
 $S^+(t)=S(t)$ and $S^-(t)x=-S(t)(-x)$ for all $(x,t)\in X\times\R_+$. The condition
 $S(t)x=\sup_{\pi\in P_t}I(\pi)x$ has already been discussed in Subsection~\ref{sec:nisio}. 
 If this equation holds, we conclude
 \[ S^-(t)x=-S(t)(-x)=-\sup_{\pi\in P_t}I(\pi)(-x)
 	=\inf_{\pi\in P_t}-I(\pi)(-x)=\inf_{\pi\in P_t}I^-(\pi)(x). \]
 We will see in Subsection~\ref{sec:gexp} that the verification of Assumption~\ref{ass:Lset}(i)-(iv) 
 is straightforward, if $I(t)$ is a supremum over linear semigroups. Basically, condition~(ii)
 is the consequence of interchanging a supremum with an infimum at the cost of 
 an inequality. 
 Furthermore, it is clear that Theorem~\ref{thm:Lsym2} remains true without 
 Assumption~\ref{ass:Lset}(iii) and~(iv), if $I(t)=I^+(t)$ for all $t\geq 0$. 
\end{remark}

\section{Examples}
\label{sec:examples}

We illustrate our main results with several examples. First, we consider Nisio semigroups,
which have already been discussed in Subsection~\ref{sec:nisio}. This first type of 
examples is illustrated by a convex version of the $g$-expectation as well as a sublinear 
version the geometric Brownian motion and the $G$-expectation. 
Second, we start with a linear semigroup $(S_0(t))_{t\geq 0}$ and consider the 
generating family $(I(t))_{t\geq 0}$ given as the perturbation
\[ I(t)x:=S_0(t)x+t \Psi(x) \quad\mbox{for all } (x,t)\in X\times\R_+, \]
where $\Psi\colon X\to X$ is a Lipschitz continuous mapping. If we choose $X:=\R^d$ 
and $S_0(t):=\id_{\R^d}$, for every $x\in\R^d$, we obtain the unique solution to the 
ODE
\[ y'(t)=f(y(t)) \quad\mbox{for all } t\geq 0, \quad y(0)=x. \]
The main example of this second type are reaction-diffusion equations, where the
operators $I(t)$ are neither convex nor monotone. Note that the results in 
Section~\ref{sec:construct} do not rely at all on these properties. The verification of
condition~\eqref{eq:gen} becomes more complicated, but is possible by proving a
suitable recursion formula for the iterated operators $I(2^{-n})^k$.

The theory and the examples in this paper are independent of results from the established 
PDE theory. For a common approach to nonlinear parabolic equations we refer to~\cite{Lunardi1995}. 
There, short time existence is proved by a fixed-point argument, and, as long as the solution 
does not blow up, long time existence follows. In the semi-linear case, blow up in finite
time is excluded, if the the non-linearity is locally Lipschitz continuous and does not grow 
faster than linear. While the approach in~\cite{Lunardi1995} relies strongly on a priori 
estimates in suitable function spaces, we use stochastic representations for the generating 
family $(I(t))_{t\geq 0}$ and It\^o calculus.

\subsection{Convex g-expectation}
\label{sec:gexp}

In this subsection, we construct a semigroup, which corresponds to a Brownian motion
with uncertain drift. The generator is a semi-linear second order differential operator, 
where the first-order non-linearity corresponds to the uncertainty in the drift.
Let $(W_t)_{t\geq 0}$ be a $d$-dimensional Brownian motion on a probability space 
$(\Omega,\F,\P)$. Furthermore, let $L\colon\R^d\to [0,\infty]$ be a function such that
\begin{itemize}
 \item $\min_{\lambda\in\R^d}L(\lambda)=0$,
 \item $\lim_{|\lambda|\to\infty}\nicefrac{L(\lambda)}{|\lambda|}=\infty$.
\end{itemize}
For every $t\geq 0$, $f\in C_0(\R^d;\R)$ and $x\in\R^d$, we define
\[ \big(I(t)f\big)(x):=\sup_{\lambda\in\R^d}
	\big(\E[f(x+W_t+\lambda t)]-L(\lambda)t\big), \]
where $\E[X]$ denotes the expectation of random variable 
$X\colon\Omega\to\R$. For every $\lambda\in\R^d$, we denote by 
$(S_\lambda(t))_{t\geq 0}$ the linear semigroup given by
\[ \big(S_\lambda(t)f\big)(x):=\E[f(x+W_t+\lambda t)]
	\quad\mbox{for all } t\geq 0, \, f\in C_0 \mbox{ and } x\in\R^d. \]
Moreover, we can write $I(t)f=\sup_{\lambda\in\R^d}I_\lambda(t)f$ for all $t\geq 0$ 
and $f\in C_0$ by defining $I_\lambda(t)f:=S_\lambda(t)f-L(\lambda)t$ for all 
$\lambda\in\R^d$. The first-order non-linearity will be described by the function
\[ H\colon\R^d\to\R,\; x\mapsto\sup_{y\in\R^d}\big(\langle x,y\rangle-L(y)\big), \]
where $\langle\cdot,\cdot\rangle$ denotes the Euclidean inner product on $\R^d$. 
Note that we are not restricted to non-linearities with linear growth. For example,
power functions $H(x):=|x|^p$ with $p\geq 1$ are included in our setting.

\begin{lemma} \label{lem:gexp}
 For every $c\geq 0$, there exists a bounded set $\Lambda\subset\R^d$ such that
 \[ I(t)f=\sup_{\lambda\in\Lambda}I_\lambda(t)f
 	\quad\mbox{for all } t\geq 0 \mbox{ and } f\in\Lip_0(c). \]
\end{lemma}
\begin{proof}
 Let $c\geq 0$, $f\in\Lip_0(c)$ and $\lambda_0\in\R^d$ with $L(\lambda_0)<\infty$,
 which exists by assumption. For every $\lambda\in\R^d$, $t\geq 0$ and $x\in\R^d$,
 \begin{align*}
  \E[f(x+W_t+\lambda t)]-L(\lambda)t 
  &\leq\E[f(x+W_t+\lambda_0 t)]+c|\lambda-\lambda_0|t-L(\lambda)t \\
  &=\big(I_{\lambda_0}(t)f\big)(x)+\big(L(\lambda_0)+c|\lambda-\lambda_0|-L(\lambda)\big)t.
 \end{align*}
 Since the assumption on $L$ implies $L(\lambda_0)+c|\lambda-\lambda_0|-L(\lambda)\to -\infty$ 
 as $|\lambda|\to\infty$, the claim follows. 
\end{proof}

Let $C_0^2$ be the space of all twice continuously differentiable functions $f\in C_0$ 
such that the first and second derivative vanish at infinity. For every $\lambda\in\R^d$,
we denote by $A_\lambda$ the generator of $(S_\lambda(t))_{t\geq 0}$. It follows 
from Ito's formula that $ C_0^2\subset D(A_\lambda)$ with 
\[ A_\lambda f=\frac{1}{2}\Delta f+\nabla_\lambda f 
	\quad\mbox{for all } f\in C_0^2, 
	\mbox{ where } \nabla_\lambda f:=\langle\lambda,\nabla f\rangle. \]

\begin{theorem} \label{thm:gexp}
 The family $(I(t))_{t\geq 0}$ satisfies the assumptions of Lemma~\ref{lem:C0}. Hence, 
 Theorem~\ref{thm:construct} yields an associated semigroup $(S(t))_{t\geq 0}$ on $C_0$. 
 Moreover,  it holds $C_0^2\subset D(A)$ with 
 \[ Af=\sup_{\lambda\in\R^d}A_\lambda f
 	=\frac{1}{2}\Delta f+H(\nabla f) \quad\mbox{for all } f\in C_0^2. \]
\end{theorem}
\begin{proof}
 First, we verify the conditions~(i)-(iv) of Lemma~\ref{lem:C0}. 
 \begin{enumerate}
  \item Clearly, $I(0)=\id_{C_0}$. 
  \item For every $t\geq 0$ and $f\in C_0$, the first assumption on $L$ implies
   \[ -\|f\|_\infty=-\|f\|_\infty-\inf_{\lambda\in\R^d}L(\lambda)t\leq I(t)f
   	\leq\sup_{\lambda\in\R^d}S_\lambda(t)f\leq\|f\|_\infty. \]
   We obtain $\|I(t)f\|_\infty\leq\|f\|_\infty$.
  \item For every $\lambda\in\R^d$, $t\geq 0$ and $f,g\in C_0$, 
   \[ I_\lambda(t)f-I(t)g\leq I_\lambda(t)f-I_\lambda(t)g
   	=S_\lambda(t)(f-g)\leq\|f-g\|_\infty.  \]
   By taking the supremum over $\lambda\in\R^d$ and changing the role of $f$ and $g$,
    we obtain $\|I(t)f-I(t)g\|_\infty\leq\|f-g\|_\infty$. 
  \item For every $t\geq 0$, $f\in C_0$ and $x\in\R^d$,
   \[ \left(I(t)f_x\right)(0)
   	=\sup_{\lambda\in\R^d}\big(\E[f(x+(0+W_t+\lambda t))]-L(\lambda)t\big)
   	=(I(t)f)(x). \]
 \end{enumerate}   	
 
 Second, we show that
 \begin{equation} \label{eq:gexp I}
  \lim_{t\downarrow 0}\left\|\frac{I(t)f-f}{t}-\frac{1}{2}\Delta f-H(\nabla f)\right\|_\infty=0
  \quad\mbox{for all } f\in C_0^2. 
 \end{equation}
 Let $f\in C_0^2$. By Lemma~\ref{lem:gexp}, there exists $r\geq 0$ such that
 \[ I(t)f=\sup_{\lambda\in B(0,r)}I_\lambda(t)f \quad\mbox{for all } t\geq 0. \]
 Moreover, the constant $r\geq 0$ can be chosen such that
 \[ H(\nabla f)=\sup_{\lambda\in B(0,r)}\big(\langle\lambda,\nabla f\rangle-L(\lambda)\big), \]
 because $\|\nabla f\|_\infty<\infty$ and $L$ growths faster than linear. 
 Hence, it follows from It\^o's formula that
 \begin{align*}
  &\left\|\frac{I(t)f-f}{t}-\frac{1}{2}\Delta f-H(\nabla f)\right\|_\infty
  \leq\sup_{\lambda\in B(0,r)}\left\|\frac{S_\lambda(t)f-f}{t}
  	-\frac{1}{2}\Delta f-\langle\lambda,\nabla f\rangle\right\|_\infty \\
  &\leq\sup_{\lambda\in B(0,r)}\frac{1}{t}\int_0^t
  	\left(\|\nabla_\lambda f(\cdot+X^\lambda_s)-\nabla_\lambda f\|_\infty
  	+\frac{1}{2}\|\Delta f(\cdot+X^\lambda_s)-\Delta f\|_\infty\right)\d s,
 \end{align*}
 where $X^\lambda_s:=W_s+\lambda s$ for all $\lambda\in\R^d$ and $s\geq 0$.
 Since $f\in C_0^2$, for every $\epsilon>0$, there exists $\delta>0$ such that 
 \[ \left(\|\nabla_\lambda f(\,\cdot\,+X^\lambda_s)-\nabla_\lambda f\|_\infty
  	+\frac{1}{2}\|\Delta f(\,\cdot\,+X^\lambda_s)-\Delta f\|_\infty\right)
  	\one_{\{|X^\lambda_s|<\delta\}}<\epsilon. \]
 for all $\lambda\in\R^d$ and $s\geq 0$. Furthermore, Chebyshev's inequality implies
 \begin{align*}
  \sup_{\lambda\in B(0,r)}\P\big(|X^\lambda_s|\geq\delta\big)
  &\leq\sup_{\lambda\in B(0,r)}\frac{\E[|X^\lambda_s|^2]}{\delta^2}
  \leq\sup_{\lambda\in B(0,r)}\frac{2}{\delta^2}\big(\E[|W_s|^2]+|\lambda|^2s^2\big) 
  \to 0\quad\mbox{as } s\downarrow 0.
 \end{align*}
 
 Third, we verify the conditions~(v) and~(vi) of Lemma~\ref{lem:C0}. It follows
 immediately from inequality~\eqref{eq:gexp I} that $C_0^2\subset\Lip_0\cap\L^I$.
 In particular, we can choose a countable set $\D\subset\Lip_0\cap\L^I$, which is
 dense in $C_0$. It remains to show condition~(vi). Let $c\geq 0$ and choose $r\geq 0$
 such that 
 \[ I(t)f=\sup_{\lambda\in B(0,r)}I_\lambda(t)f 
 	\quad\mbox{for all } t\geq 0 \mbox{ and } f\in\Lip_0(c), \]
 which exists due to Lemma~\ref{lem:gexp}. We define
 \[ \big(T_c(t)f\big)(x):=e^\frac{r^2t}{2}\E\left[f^2(x+W_t)\right]^\frac{1}{2}
   	\quad\mbox{for all } t\geq 0,\, f\in C_0^+ \mbox{ and } x\in\R^d. \]
 The family $(T_c(t))_{t\geq 0}$ is a monotone semigroup on $C_0^+$. It remains
 to show 
 \[ |I(t)f|\leq T_c(t)|f| \quad\mbox{for all } f\in\Lip_0(c) \mbox{ and } t\geq 0. \]
 Let $f\in\Lip_0(c)$ and $t\geq 0$. We use $W_t\sim\n(0,t\one)$, where $\n(0,t\one)$ 
 denotes the normal distribution, and the formula for its moment generating function 
 to estimate
 \begin{align*}
    &|(S_\lambda(t)f)(x)|\leq\E[|f(x+W_t+\lambda t)|] \\
    &=(2\pi t)^{-\frac{d}{2}}\int_{\R^d}|f(x+y+\lambda t)|\exp\left(-\tfrac{|y|^2}{2t}\right)\d y \\
    &=(2\pi t)^{-\frac{d}{2}}\int_{\R^d}|f(x+y)|\exp\left(-\tfrac{|y-\lambda t|^2}{2t}\right)\d y \\
    &=e^{-\frac{|\lambda|^2t}{2}}\int_{\R^d}|f(x+y)|\exp\big(\langle\lambda,y\rangle\big)\,
    	\n(0,t\one)(\d y) \\
    &\leq e^{-\frac{|\lambda|^2t}{2}}\left(\int_{\R^d}f^2(x+y)\,\n(0,t\one)(\d y)\right)^\frac{1}{2}
    	\left(\int_{\R^d}\exp\big(2\langle\lambda,y\rangle\big)\,\n(0,t\one)(\d y)\right)^\frac{1}{2} \\
    &=e^{-\frac{|\lambda|^2t}{2}}\E\left[f^2(x+W_t)\right]^\frac{1}{2}e^{|\lambda|^2t} 
    	=e^\frac{|\lambda|^2t}{2}\E\left[f^2(x+W_t)\right]^\frac{1}{2} 
    	\leq\big(T_c(t)|f|\big)(x)
   \end{align*}
   for all $\lambda\in B(0,r)$. Taking the supremum yields 
   \[ I(t)f\leq\sup_{\lambda\in B(0,r)}S_\lambda(t)f\leq T_c(t)|f|. \]
   Furthermore, by assumption, there exists $\lambda_0\in\R^d$ with $L(\lambda_0)=0$. 
   W.l.o.g. we can assume $r\geq |\lambda_0|$ and obtain 
   $I(t)f\geq S_{\lambda_0}(t)f\geq -T_c(t)|f|$. 
   
   Forth, by Theorem~\ref{thm:construct}, there exists an associated semigroup 
   $(S(t))_{t\geq 0}$ on $C_0$. In particular, Assumption~\ref{ass:gen} is satisfied. In 
   addition, it follows from Lemma~\ref{lem:gen convex} that condition~\eqref{eq:gen} holds 
   for all $f,g\in C_0$. Hence, Theorem~\ref{thm:gen} implies $C_0^2\subset D(A)$ with 
   \[ Af=\frac{1}{2}\Delta f+H(\nabla f) \quad\mbox{for all } f\in C_0^2. \qedhere \]
\end{proof}

Let $L^\infty$ be the space of all bounded, Borel measurable functions $f\colon\R^d\to\R$, 
where two of them are identified if they coincide Lebesgue almost everywhere. Moreover, we 
denote by $W^{1,\infty}$ the corresponding first order Sobolev space. For $f\in W^{1,\infty}$
we say that $\Delta f$ exists in $L^\infty$ if there exists a function $g\in L^\infty$ with
\[ \int_{\R^d} g\phi\,\d x=-\int_{\R^d}\langle\nabla f,\nabla\phi\rangle\,\d x 
	\quad\mbox{for all } \phi\in C^\infty_c. \]
In this case,  since $g$ is unique Lebesgue almost everywhere, we define $\Delta f:=g$.  Here, 
$C^\infty_c$ denotes the set of all infinitely differentiable functions $\phi\colon\R^d\to\R$ 
with compact support.

\begin{theorem} \label{thm:gexp2}
 In addition to previous assumptions, we assume that $L$ satisfies
 \begin{itemize}
  \item $\sup_{\lambda\in B(0,r)\cap\{L<\infty\}}L(\lambda)<\infty$ for all $r>0$,
  \item there exists $\epsilon>0$ with $\sup_{\{|\lambda|=\epsilon\}}L(\lambda)<\infty$.
 \end{itemize}
 Then, it holds $S(t)\colon\L^S_{\sym}\to\L^S_{\sym}$ for all $t\geq 0$, and
 \[ \L^S_{\sym}=\L^I_{\sym}
 	=\big\{f\in W^{1,\infty}\cap C_0\colon \Delta f \mbox{ exists in } L^\infty\big\}. \]
\end{theorem}
\begin{proof}
 We define $I^+(t):=I(t)$ and $I^-(t)x:=-I(t)(-x)$ for all $t\geq 0$ and $x\in X$.
 First, we verify the conditions~(i)-(iii) of Assumption~\ref{ass:Lset}.
 \begin{enumerate}
  \item It holds $I^-(t)f=-I(t)(-f)\leq I(t)f=I^+(t)f$ for all $(f,t)\in C_0\times\R_+$, because
   $I(t)$ is convex and $I(t)0=0$. 
  \item Let $s,t\geq 0$ and $f\in C_0$. We use Fubini's theorem to conclude
   \begin{align*}
    I(s)I^-(t)f &=\sup_{\lambda_1\in\R^d}\Big(S_{\lambda_1}(s)
    \Big(\inf_{\lambda_2\in\R^d}\big(S_{\lambda_2}(t)f+L(\lambda_2)t\big)\Big)-L(\lambda_1)s\Big) \\
    &\leq\sup_{\lambda_1\in\R^d}\inf_{\lambda_2\in\R^d}
    \big(S_{\lambda_1}(s)S_{\lambda_2}(t)f+L(\lambda_2)t-L(\lambda_1)s\big) \\
    &\leq\inf_{\lambda_2\in\R^d}\sup_{\lambda_1\in\R^d}
    	\Big(S_{\lambda_2}(t)\big(S_{\lambda_1}(s)f-L(\lambda_1)s\big)+L(\lambda_2)t\Big) \\
    &\leq\inf_{\lambda_2\in\R^d}\big(S_{\lambda_2}(t)I(s)f+L(\lambda_2)t\big) =I^-(t)I(s)f. 
   \end{align*}
  \item For every $s,t\geq 0$ and $f\in C_0$,
    \begin{align*}
     I(s+t)f
     &=\sup_{\lambda\in\R^d}\big(S_\lambda(s+t)f-L(\lambda)(s+t)\big) \\
     &=\sup_{\lambda\in\R^d}\big(S_\lambda(s)S_\lambda(t)f-L(\lambda)s-L(\lambda)t\big) \\
     &=\sup_{\lambda\in\R^d}\Big(S_\lambda(s)\big(S_\lambda(t)f-L(\lambda)t\big)-L(\lambda)s\Big) \\
     &\leq\sup_{\lambda\in\R^d}\big(S_\lambda(s)I(t)f-L(\lambda)s\big)
     =I(s)I(t)f.
   \end{align*}
 \end{enumerate}
 
 Second, we show $S(t)x=\sup_{\pi\in P_t}I(\pi)x$ for all $(x,t)\in X\times\R_+$.
 To do so, we use Lemma~\ref{lem:nisio}. Clearly, $I(t)$ is continuous from below for all 
 $t\geq 0$. Next, we show that the mapping $I(\cdot)f$ is continuous for all $f\in C_0$: 
 For every $f\in C_0^2$, it follows from Lemma~\ref{lem:gexp} and It\^o's formula that
 there exists $r\geq 0$ with
 \begin{align*}
  \|I(t)f-I(s)f\|_\infty
  &\leq\sup_{\lambda\in B(0,r)\cap\{L<\infty\}}
  	\big(\|S_\lambda(t-s)f\|_\infty+L(\lambda)(t-s)\big) \\
  &\leq\sup_{\lambda\in B(0,r)\cap\{L<\infty\}}
  	\left(|\lambda|\cdot\|\nabla f\|_\infty+\frac{1}{2}\|\Delta f\|_\infty+L(\lambda)\right)(t-s)
 \end{align*}
 for all $0\leq s\leq t$. Moreover, the assumption on $L$ implies
 \[ \sup_{\lambda\in B(0,r)\cap\{L<\infty\}}
 	\left(|\lambda|\cdot\|\nabla f\|_\infty+\frac{1}{2}\|\Delta f\|_\infty+L(\lambda)\right)<\infty. \]
 Since $C_0^2\subset C_0$ is dense and $(I(t))_{t\geq 0}$ is a family of contractions, the 
 mapping $I(\cdot)f$ is also continuous for arbitrary $f\in C_0$, see the proof of 
 Lemma~\ref{lem:S cont}. It remains to show that, for every $t\geq 0$, the operator
 \[ T(t)\colon C_0\to C_0, \; f\mapsto\sup_{\pi\in P_t}I(\pi)f \]
 is well-defined. Lemma~\ref{lem:C0} implies $\{I(\pi)\colon\pi\in P_t\}\subset\Lip_0(c)$, 
 and therefore $T(t)f\in\Lip_0(c)$ for all $c,t\geq 0$ and $f\in\Lip_0(c)$. Since
 $T(t)\colon C_0\to\L^\infty$ is continuous and $\Lip_0\subset C_0$ is dense, we obtain 
 $T(t)\colon C_0\to C_0$ for all $t\geq 0$. We can apply Lemma~\ref{lem:nisio} and
 conclude that $S(t)x=T(t)x$ for all $(x,t)\in X\times\R_+$. Hence, Assumption~\ref{ass:Lset}
 is satisfied and Theorem~\ref{thm:Lsym2} yields $\L^S=\L^I$ and 
 $S(t)\colon\L^S_{\sym}\to\L^S_{\sym}$ for all $t\geq 0$. 
 
 Third, we show 
 $\L^I_{\sym}\subset\{f\in W^{1,\infty}\cap C_0\colon \Delta f \mbox{ exists in } L^\infty\}$.
 Let $f\in\L^I_{\sym}$. By definition, there exist $t_0>0$ and $c\geq 0$ such that 
 \[ \|I(t)f-f\|_\infty\leq ct \quad\mbox{and}\quad \|I(t)(-f)+f\|_\infty\leq ct
 	\quad\mbox{for all } t\in [0,t_0]. \]
 For every $\lambda\in\R^d$ and $t\in [0,t_0]$, 
 \begin{align*}
  -\big(c+L(\lambda)\big)t &\leq -\big(I(t)(-f)+f+L(\lambda)t\big)\leq -\big(S_\lambda(t)(-f)+f\big) \\
  &=S_\lambda(t)f-f\leq I(t)f-f+L(\lambda)t\leq\big(c+L(\lambda)\big)t,
 \end{align*}
 and therefore $\|S_\lambda(t)f-f\|_\infty\leq (c+L(\lambda))t$. Let $\eta\in C^\infty_c$
 with $\supp(\eta)\subset B(0,1)$ and $\int_{\R^d}\eta(x)\,\d x=1$. For every $n\in\N$
 and $x\in\R^d$, we define $\eta_n(x):=n^d\eta(nx)$ and 
 \[ f_n(x):=\big(f*\eta_n\big)(x)=\int_{\R^d}f(x-y)\eta_n(y)\,\d y. \]
 Let $\lambda\in\R^d$ and $t\geq 0$. Fubini's theorem implies
 \begin{align*}
  \big|S_\lambda(t)f_n-f_n\big|(x)
  &=\left|\E\left[\int_{\R^d}f(x+W_t+\lambda t-y)\eta_n(y)\,dy\right]-f_n(x)\right| \\
  &=\left|\int_{\R^d}\E\big[f(x+W_t+\lambda t-y)\big]\eta_n(y)\,dy-f_n(x)\right| \\
  &=\big|(S_\lambda(t)f-f)*\eta_n\big|(x) \leq\big\|S_\lambda(t)f-f\big\|_\infty
  \leq\big(c+L(\lambda)\big)t.
 \end{align*}
 Since $f_n\in C_0^2\subset D(A_\lambda)$, we obtain
 \begin{equation} \label{eq:Lsym n}
  \|A_\lambda f_n\|_\infty\leq c+L(\lambda) \quad\mbox{for all } n\in\N.
 \end{equation}
 Moreover, for every $n\in\N$, we have the identities
 \begin{align}
  \Delta f_n
  &=\frac{1}{2}\Delta f_n+\nabla_\lambda f_n+\frac{1}{2}\Delta f_n+\nabla_{-\lambda} f_n
  =A_\lambda f_n+A_{-\lambda} f_n, \label{eq:delta} \\
  2\nabla_\lambda f_n 
  &=\frac{1}{2}\Delta f_n+\nabla_\lambda f_n+\frac{1}{2}\Delta (-f_n)+\nabla_{-\lambda} (-f_n)
  =A_{\lambda}f_n+A_{-\lambda}(-f_n). \label{eq:lambda}
 \end{align}
 It follows from inequality~\eqref{eq:Lsym n} and inequality~\eqref{eq:delta} that
 $\sup_{n\in\N}\|\Delta f_n\|_\infty<\infty$, since the assumptions on $L$
 ensure the existence of $\lambda\in\R^d$ with $L(\pm\lambda)<\infty$.
 Furthermore, we use inequality~\eqref{eq:Lsym n},
 inequality~\eqref{eq:lambda} and the assumption on $L$ to estimate
 \begin{align*}
  \|\nabla f_n\|_\infty
  &=\frac{1}{\epsilon}\sup_{\{|\lambda|=\epsilon\}}\|\nabla_\lambda f_n\|_\infty
  \leq\frac{1}{2\epsilon}\sup_{\{|\lambda|=\epsilon\}}
  \big(\|A_\lambda f_n\|_\infty+\|A_\lambda(-f_n)\|_\infty\big) \\
  &\leq\frac{1}{\epsilon}\sup_{\{|\lambda|=\epsilon\}}\big(c+L(\lambda)\big)<\infty
  \quad\mbox{for all } n\in\N.
 \end{align*}
 By Banach-Alaoglu's theorem, there exist functions $g,g_i\in L^\infty$ such that 
 $\Delta f_{n_k}\to g$ and $\partial_i f_n\to g_i$ as $k\to\infty$ in the weak*-topology 
 for a suitable subsequence. This implies 
 $f\in\{h\in W^{1,\infty}\cap C_0\colon \Delta h \mbox{ exists in } L^\infty\}$
 with $\Delta f=g$ and $\partial_i f=g_i$ for $i=1,\ldots,d$. 
 
 Forth, we show
 $\{f\in W^{1,\infty}\cap C_0\colon \Delta f \mbox{ exists in } L^\infty\}\subset\L^I_{\sym}$.
 Let $f\in W^{1,\infty}\cap C_0$ such that $\Delta f$ exists in $L^\infty$. By 
 Lemma~\ref{lem:gexp}, there exists $r\geq 0$ such that 
 \[ I(t)f=\sup_{\lambda\in B(0,r)}I_\lambda(t)f \quad\mbox{for all } t\geq 0. \]
 Let $t\geq 0$, $\lambda\in B(0,r)$ and $f_n:=f*\eta_n$ for all $n\in\N$. We use 
 It\^o's formula, $\nabla f_n=(\nabla f)*\eta_n$ and $\Delta f_n=(\Delta f)*\eta_n$
 to estimate
 \begin{align*}
  I_\lambda(t)f-f
  =\lim_{n\to\infty}\big(I_\lambda(t)f_n-f_n\big) 
  &\leq\sup_{n\in\N}\left(|\lambda|\cdot\|\nabla f_n\|_\infty+\frac{1}{2}\|\Delta f_n\|_\infty
  	-L(\lambda)\right)t \\
  &\leq\left(r\cdot\|\nabla f\|_\infty+\frac{1}{2}\|\Delta f\|_\infty\right)t. 
 \end{align*}
 Taking the supremum over $\lambda\in B(0,r)$ yields
 \[ I(t)f-f\leq\left(r\cdot\|\nabla f\|_\infty+\frac{1}{2}\|\Delta f\|_\infty\right)t
 	\quad\mbox{for all } t\geq 0. \]
 For the lower bound, we choose $\lambda\in\R^d$ with $L(\lambda)=0$ and obtain
 \[ I(t)f-f\geq S_\lambda(t)f-f
 	\geq -\left((|\lambda|\cdot\|\nabla f\|_\infty+\frac{1}{2}\|\Delta f\|_\infty\right)t
 	\quad\mbox{for all } t\geq 0. \]
 This shows $f\in\L^I$. Applying the same argument on $-f$ yields $f\in\L^I_{\sym}$.
\end{proof}

\begin{remark} \Newline
 \begin{enumerate}
  \item For functions $f\in\L^S_{\sym}$ the Laplacian and gradient are defined in the 
   distributional sense and $\frac{1}{2}\Delta f+H(\nabla f)\in L^\infty$. By extending 
   the semigroup $(S(t))_{t\geq 0}$ from $C_0$ to an exponential Orlicz heart, one can 
   show that the Cauchy problem 
    \[ \partial_t u(t)=\frac{1}{2}\Delta u(t)+H(\nabla u(t)) \quad\mbox{for all } t\geq 0,
    	\quad u(0)=f, \]
   has a unique classical solution, which is represented by the extended semigroup.
   Here, the initial value $f\in\L^S_{\sym}$ is chosen such that the generator $A f$ is 
   defined w.r.t. to the Orlicz norm, which is weaker than the supremum norm. For details,
   we refer to~\cite{BK21}. A sublinear version of this example had previously been studied 
   in~\cite{denk2019convex}. 
  \item The explicit description of the symmetric Lipschitz set in the previous theorem 
   relies only on elementary estimates and Banach-Alaoglu's theorem. Nonetheless, 
   by using the results in~\cite{Lunardi1995}, one can improve the regularity. It holds
   \[ \L^S_{\sym}=\left\{f\in\bigcap\nolimits_{p\geq 1}W^{2,p}_{\loc}\cap C_0
   	\colon\Delta f\in L^\infty\right\}, \]
   i.e., the symmetric Lipschitz set equals the domain of the Laplacian in $C_0$.
   \begin{proof}
    Let $\{f\in\bigcap\nolimits_{p\geq 1}W^{2,p}_{\loc}\cap C_0\colon\Delta f\in L^\infty\}$. 
    By~\cite[Theorem~3.1.7]{Lunardi1995}, it holds $f\in W^{1,\infty}$, and therefore 
    $f\in\L^S_{\sym}$.
    Now, let $f\in\L^I_{\sym}$ and $f_n:=f*\eta_n$ for all $n\in\N$. Fix $p>d$. 
    By~\cite[Theorem~3.1.6]{Lunardi1995}, there exist $c\geq 0$ and $\epsilon>0$ such 
    that, for all $n\in\N$,
    \[ \sup_{x\in\R^d}\|D^2 f_n\|_{L^p(B(x,\epsilon))}
    	\leq c\big(\|f_n\|_\infty+\|\Delta f_n\|_\infty\big) 
    	\leq c\big(\|f\|_\infty+\|\Delta f\|_\infty. \]
    This implies $\sup_{n\in\N}\|f_n\|_{W^{2,p}(B(0,r))}<\infty$ for all $r\geq 0$ and $n\in\N$.
    Taking the limit $n\to\infty$ yields $f\in W^{2,p}(B(0,r))$ for all $r\geq 0$, i.e.,
    $f\in W^{2,p}_{\loc}$. Since $W^{2,q}_{\loc}\subset W^{2,p}_{\loc}$ for all $p\leq q$, 
    we obtain $f\in\bigcap_{p\geq 1}W^{2,p}_{\loc}$. 
    The last part of the claim follows from~\cite[Theorem 3.1.7]{Lunardi1995}.  
   \end{proof}
 \end{enumerate} 
\end{remark}

\subsection{Geometric Brownian motion}
\label{subsec:GBM}

In this subsection, we construct a semigroup, which corresponds to a geometric 
Brownian motion with uncertain drift and volatility. Based on the Nisio approach, 
this example has been studied in~\cite{roecknen}, where the authors obtain a 
viscosity solution for the associated Cauchy problem. In contrast to the previous
example, we have to weaken the supremum norm by adding a weight function. 
 
Let $(W_t)_{t\geq 0}$ be a $1$-dimensional Brownian motion on a probability space 
$(\Omega,\F,\P)$ and $p>1$. We choose the weight function
\[ \kappa\colon\R\to(0,\infty),\; x\mapsto (1+|x|^p)^{-1}. \] 
Let $\Lambda\subset\R\times\R_+$ be a bounded set. For every 
$f\in\UC_\kappa(\R^d;\R)$, $t\geq 0$ and $x\in\R$, we define
\[ \big(I(t)f\big)(x):=\sup_{\lambda\in\Lambda}\E\big[f(X_t^{\lambda,x})\big], \]
where $X_t^{\lambda,x}:=xX^\lambda_t$ and
$X_t^\lambda:=\exp\big(\big(\mu-\tfrac{\sigma^2}{2}\big)t+\sigma W_t\big)$
for $\lambda:=(\mu,\sigma^2)$. Furthermore, for every $\lambda\in\Lambda$, let 
$(S_\lambda(t))_{t\geq 0}$ be the linear semigroup given by
\[ \big(S_\lambda(t)f\big)(x):=\E\big[f(X_t^{\lambda,x})\big]
	\quad\mbox{for all } t\geq 0, \, f\in\UC_\kappa \mbox{ and } x\in\R. \]
We start with two auxiliary lemmas.

\begin{lemma} \label{lem:gBM}
 For every $\lambda:=(\mu,\sigma^2)\in\Lambda$, $t\geq 0$, $x\in\R$ and $f\in\UC_\kappa$,
 \[ \E\big[\big|X^\lambda_t\big|\big]=e^{\mu t}, \quad
 	\E\big[\big|X^\lambda_t\big|^p\big]\leq e^{\omega t} \quad\mbox{and}\quad
 	\E\big[\big|f(X_t^{\lambda,x})\big|\big]\leq\|f\|_\kappa\big(1+|x|^p\big) e^{\omega t}, \]
 where 
 $\omega:=\sup_{\lambda\in\Lambda}p\big(\mu+\frac{(p-1)\sigma^2}{2}\big)^+<\infty$.
\end{lemma}
\begin{proof}
 Fix $t\geq 0$ and $\lambda\in\Lambda$. We use the moment generating function 
 of the normal distribution to compute
 \[ \E\big[\big|X^\lambda_t\big|\big] 
 	=\exp\left(\left(\mu-\tfrac{\sigma^2}{2}\right)t\right)\E\big[\exp(\sigma W_t)\big]
 	=\exp\left(\left(\mu-\tfrac{\sigma^2}{2}\right)t\right)\exp\left(\tfrac{\sigma^2 t}{2}\right) 
 	=e^{\mu t}, \]
 and
 \[ \E\big[\big|X^\lambda_t\big|^p\big] 
 	=\exp\left(p\left(\mu-\tfrac{\sigma^2}{2}\right)t\right)\E\big[\exp(p\sigma W_t)\big]
 	=\exp\left(p\left(\mu+\tfrac{(p-1)\sigma^2}{2}\right)t\right) \leq e^{\omega t}. \]  
 Let $f\in\UC_\kappa$ and $x\in\R^d$. Since 
 $|f(X_t^{\lambda,x})|\kappa(X_t^{\lambda,x})\leq\|f\kappa\|_\infty= \|f\|_\kappa$, 
 we obtain $|f(X_t^{\lambda,x})|\leq\|f\|_\kappa (1+|X_t^{\lambda,x}|^p)$. We use the 
 previous estimate to conclude
 \[ \E\big[\big|f(X_t^{\lambda,x})\big|\big] 
 	\leq\|f\|_\kappa\Big(1+|x|^p\E\big[\big|X_t^\lambda\big|^p\big]\Big)
 	\leq\|f\|_\kappa\big(1+|x|^p\big) e^{\omega t}. \qedhere \]
\end{proof}

Lemma~\ref{lem:gBM} ensures that $S_\lambda(t)f\colon\R\to\R$ and $I(t)f\colon\R\to\R$
are well-defined functions for all $\lambda\in\Lambda$, $t\geq 0$ and $f\in \UC_\kappa$.

\begin{lemma} \label{lem:gBM2}
 For every $q\in [1,\infty)$ and $\epsilon>0$,
 \[ \lim_{t\downarrow 0}\,\sup_{\lambda\in\Lambda}
 	\P\big(\big|(X_{t}^{\lambda})^q-1\big|\geq\epsilon\big)=0 \quad\mbox{and}\quad
 	\lim_{t\downarrow 0}\,\sup_{\lambda\in\Lambda}
 	\E\big[\big|(X_{t}^{\lambda})^q-1\big|\big]=0.  \]
\end{lemma}
\begin{proof}
 Let $q\in [1,\infty)$ and $\epsilon>0$. Choose $\delta>0$ such that $|e^x-1|<\epsilon$ 
 for all $x\in (-\delta,\delta)$. Then, for every $t\geq 0$ and $\lambda\in\Lambda$, 
 \[ \P\big(\big|(X_{t}^{\lambda})^q-1\big|\geq\epsilon\big)
 	\leq\P\big(q\big|\big(\mu-\tfrac{\sigma^2}{2}\big)t+\sigma W_t\big|\geq\delta\big). \]
 Let $c:=\sup_{\lambda\in\Lambda}\max\{q\sigma,q|\mu-\nicefrac{\sigma^2}{2}|\}$. Then,
 for every $t\in\big[0,\nicefrac{\delta}{2c}\big]$ and $\lambda\in\Lambda$, 
 \[ q\big|\big(\mu-\tfrac{\sigma^2}{2}\big)t+\sigma W_t\big|\geq\delta 
 	\quad\mbox{implies}\quad |W_t|\geq\tfrac{\delta}{2c}>0. \]
 We conclude
 \[ \sup_{\lambda\in\Lambda}\P\big(\big|(X_{t}^{\lambda})^q-1\big|\geq\epsilon\big)
 	\leq\P\big(|W_t|\geq\tfrac{\delta}{2c}\big)
 	\quad\mbox{for all } t\in\big[0,\tfrac{\delta}{2c}\big].\]
 Since the right hand side of the previous inequality converges to zero as $t\downarrow 0$, 
 we obtain the first part of the claim. 
 
 Furthermore, by Lemma~\ref{lem:gBM}, the set 
 $\big\{\big|(X_{t}^{\lambda})^q-1\big|\colon t\in [0,1],\lambda\in\Lambda\big\}$
 is bounded in $L^2(\P)$, and therefore uniformly integrable. Hence, the second part of the
 claim follows from the first one, similar to the Vitali convergence theorem.
\end{proof}

Let $C^2_c$ be the set of all twice continuously differentiable functions 
$f\colon\R\to\R$ with compact support.

\begin{theorem}
 The family $(I(t))_{t\geq 0}$ satisfies Assumption~\ref{ass:I} and Assumption~\ref{ass:D}, 
 i.e., Theorem~\ref{thm:construct} yields a semigroup $(S(t))_{t\geq 0}$ on $\UC_\kappa$ 
 associated to $(I(t))_{t\geq 0}$. Furthermore, it holds $C_c^2\subset D(A)$ with
 \[ (Af)(x)=\sup_{\lambda\in\Lambda}\left(\frac{1}{2}\sigma^2 x^2f''(x)+\mu xf'(x)\right)
 	\quad\mbox{for all } f\in C_c^2 \mbox{ and } x\in\R. \]
\end{theorem}
\begin{proof}
 First, we show $I(t)\colon\Lip_b(c)\to\Lip_b(e^{\omega t}c)$ for all $c,t\geq 0$. 
 Let $c,t\geq 0$ and $f\in\Lip_b(c)$. Lemma~\ref{lem:gBM} implies
  \[ |\big(S_\lambda(t)f)(x)-(S_\lambda(t)f)(y)\big| 
  	\leq\E\big[\big|f(xX^\lambda_t)-f(yX^\lambda_t)\big|\big] 
 	\leq c|x-y|\E\big[\big|X^\lambda_t\big|\big] \leq ce^{\omega t}|x-y| \]
 for all $\lambda\in\Lambda$ and $x,y\in\R$. Furthermore, it holds
 $\|I(t)f\|_\infty\leq\|f\|_\infty\leq e^{\omega t}c$.
 
 Second, we verify Assumption~\ref{ass:I}.
 \begin{enumerate}
  \item Clearly, $I(0)=\id_{\UC_\kappa}$. 
  \item Let $f\in\UC_\kappa$, $t\geq 0$, $\lambda\in\Lambda$ and $x\in\R$. It follows 
   from Lemma~\ref{lem:gBM} that 
   \[ |S_\lambda(t)f(x)|\kappa(x)\leq\|f\|_\kappa (1+|x|^p)e^{\omega t}(1+|x|^p)^{-1} 
   	=e^{\omega t}\|f\|_\kappa. \]
   Hence, we obtain $\|S_\lambda(t)f\|_\kappa\leq e^{\omega t}\|f\|_\kappa$, and therefore
   \[ \|I(t)f\|_\kappa\leq\sup_{\lambda\in\Lambda}\|S_\lambda(t)f\|_\kappa
   	\leq e^{\omega t}\|f\|_\kappa. \]
  \item For every $f,g\in\UC_\kappa$ and $t\geq 0$, 
   \[ \|I(t)f-I(t)g\|_\kappa\leq\sup_{\lambda\in\Lambda}\|S_\lambda(t)(f-g)\|_\kappa
   	\leq e^{\omega t}\|f-g\|_\kappa. \]
 \end{enumerate} 
 In particular, we obtain $I(t)\colon\UC_\kappa\to\UC_\kappa$, because $I(t)\colon\Lip_b\to\Lip_b$,
 $\Lip_b\subset\UC_\kappa$ is dense and $I(t)$ is Lipschitz continuous. 
 
 Third, we verify Assumption~\ref{ass:D}. Let $f\in C_c^2$ and choose $r\geq 0$ with 
 $\supp(f)\subset [-r,r]$.  Fix $\lambda\in\Lambda$, $t\geq 0$ and $x\in\R$. It\^o's formula 
 implies
 \[ (S_\lambda(t)f)(x)-f(x) =\mu\E\left[\int_0^t X_s^{\lambda,x}f'(X_s^{\lambda,x})\,ds\right]
 	+\frac{\sigma^2}{2}\E\left[\int_0^t (X_s^{\lambda,x})^2f''(X_s^{\lambda,x})\,ds\right]. \]
 Hence, we can estimate
 \[ \|I(t)f-f\|_\kappa \leq\|I(t)f-f\|_\infty\le \sup_{\lambda\in\Lambda}\|S_\lambda(t)f-f\|_\infty
 	\leq ct, \]
 where 
 $c:=\sup_{\lambda\in\Lambda}\big(\mu r\|f'\|_\infty+\frac{\sigma^2}{2}r^2\|f''\|_\infty\big)$.
 Since $I(t)\colon\Lip_b(c)\to\Lip_b(e^{\omega t}c)$ for all $c,t\geq 0$, it follows from
 Lemma~\ref{lem:compact2} that the sequence $(I(\pi_n^t)f)_{n\in\N}$ is relatively compact 
 in $\UC_\kappa$ for all $f\in\Lip_b$ and $t\in\T$. Furthermore, by Lemma~\ref{lem:dense}, 
 we can choose a countable set $\D\subset C_c^2$, which is dense in $\UC_\kappa$. By 
 Theorem~\ref{thm:construct} there exists a semigroup $(S(t))_{t\geq 0}$ on $\UC_\kappa$ 
 associated to $(I(t))_{t\geq 0}$.
 
 Forth, for every $f\in C_c^2$, we show that 
 \begin{equation} \label{eq:gBM I}
  \lim_{t\downarrow 0}\left\|\frac{I(t)f-f}{t}-g\right\|_\kappa=0,
 \end{equation}
 where $g:=\sup_{\lambda\in\Lambda}g_\lambda$ and 
 $g_\lambda(x):=\big(\frac{\sigma^2}{2}x^2f''(x)+\mu xf'(x)\big)$
 for all $\lambda\in\Lambda$ and $x\in\R$. It holds 
 \[ \left\|\frac{I(t)f-f}{t}-g\right\|_\kappa
 	\leq\sup_{\lambda\in\Lambda}\left\|\frac{S_\lambda(t)f-f}{t}-g_\lambda\right\|_\kappa
 	\quad\mbox{for all } t>0, \]
 and It\^o's formula implies
  \begin{align*}
  \left|\frac{S_\lambda(t)f-f}{t}-g_\lambda f\right|(x) 
  &\leq \frac{|\mu|}{t}\int_0^t\E\big[\big|X_{s}^{\lambda,x} f'(X_{s}^{\lambda,x})
  	-xf'(x)\big|\big]\,ds \\
  &\quad\; +\frac{\sigma^2}{2t}\int_0^t\E\big[\big|(X_{s}^{\lambda,x})^2 f''(X_{s}^{\lambda,x})
  	-x^2 f''(x)\big|\big]\,ds
 \end{align*}
 for all $\lambda\in\Lambda$, $t>0$ and $x\in\R$. To estimate the first term on the right hand 
 side, we show
 \begin{equation} \label{eq:drift} 
  \lim_{t\downarrow 0}\,\sup_{\lambda\in\Lambda}\,\sup_{x\in\R}
  \frac{|\mu|}{t}\int_0^t\E\big[\big|X_{s}^{\lambda,x} f'(X_{s}^{\lambda,x})-xf'(x)\big|\big]\,\d s=0. 
 \end{equation}
 Let $\epsilon>0$, $c:=1+\sup_{\lambda\in\Lambda}|\mu|$, and choose $r\geq 0$ with
 $\supp(f)\subset [-r,r]$. Moreover, since $f'$ is uniformly continuous, there exists $\delta>0$
 such that 
 \[ |f'(x)-f'(y)|<\frac{\epsilon}{6cr} \quad\mbox{for all } x,y\in\R \mbox{ with } |x-y|<2\delta r. \]
 Fix $\lambda\in\Lambda$ and $x\in\R$. We distinguish two cases:
 \begin{enumerate}
  \item Let $|x|\leq 2r$. Then, for every $s\geq 0$, 
   \begin{align*}
    &\E\big[\big|X_s^{\lambda,x} f'(X_s^{\lambda,x})-xf'(x)\big|\big]
    	=|x|\E\big[\big|X_s^\lambda f'(X_s^{\lambda,x})-f'(x)\big|\big] \\
    &\leq 2r\E\big[|X^\lambda_s-1|\cdot |f'(X_s^{\lambda,x})|\big] 
    	+2r\E\big[|f'(X_s^{\lambda,x})-f'(x)|\one_{\{|X^\lambda_s-1|\geq\delta\}}\big] \\
    	&\quad\; +2r\E\big[|f'(X_s^{\lambda,x})-f'(x)|\one_{\{|X^\lambda_s-1|<\delta\}}\big] \\
    &\leq 2r\|f'\|_\infty\E[|X^\lambda_s-1|]+4r\|f'\|_\infty\P\big(|X^\lambda_s-1|\geq\delta\big) \\
    	&\quad\; +2r\E\big[|f'(X_s^{\lambda,x})-f'(x)|\one_{\{|X^\lambda_s-1|<\delta\}}\big].
   \end{align*}
   By Lemma~\ref{lem:gBM2}, there exists $s_0>0$, independent of $\lambda$ and $x$, 
   such that
   \begin{equation} \label{eq:gBM1}
    2r\|f'\|_\infty\E[|X_s^\lambda-1|]\leq\frac{\epsilon}{3c} \quad\mbox{and}\quad
	4r\|f'\|_\infty\P(|X_s^\lambda-1|\geq\delta)\leq\frac{\epsilon}{3c}
   \end{equation}
   for all $s\in [0,s_0]$. On the set $\{|X^\lambda_s-1|<\delta\}$, it holds 
   $|X_s^{\lambda,x}-x|=|x|\cdot |X^\lambda_s-1|<2r\delta$, and therefore 
   $|f'(X_s^{\lambda,x})-f'(x)|<\frac{\epsilon}{6cr}$. This implies
   \begin{equation} \label{eq:gBM2}
    2r\E\big[|f'(X_s^{\lambda,x})-f'(x)|\one_{\{|X^\lambda_s-1|<\delta\}}\big]
    \leq\frac{\epsilon}{3c}.
   \end{equation}
   We combine inequality~\eqref{eq:gBM1} and inequality~\eqref{eq:gBM2}
   to conclude
   \begin{equation} \label{eq:gBM3}
    \sup_{s\in [0,s_0]}\sup_{\lambda\in\Lambda}\sup_{x\in [-2r,2r]}
	\E\big[\big|X_{s}^{\lambda,x} f'(X_{s}^{\lambda,x})-xf'(x)\big|\big] \leq\frac{\epsilon}{c}.
   \end{equation}
  \item Let $|x|>2r$. Then, for every $s\geq 0$,
   \begin{align*}
    \E\big[\big|X_s^{\lambda,x} f'(X_s^{\lambda,x})-xf'(x)\big|\big] 
    &=\E\big[\big|X_s^{\lambda,x} f'(X_s^{\lambda,x})\big|\one_{\{|X_s^{\lambda,x}|\leq r\}}\big] \\
    &\leq r\|f'\|_\infty\P\big(|X_s^{\lambda,x}|\leq r\big),
   \end{align*}
   because $\supp(f)\subset [-r,r]$. Furthermore, we use $|x|>2r$ to estimate
   \[ \P(|X_s^{\lambda,x}|\leq r)=\P(|x|\cdot |X_s^\lambda|\leq r)
   	\leq\P\left(|X^\lambda|<\tfrac{1}{2}\right). \]
   By Lemma~\ref{lem:gBM2}, there exists $s_1\in (0,s_0]$, independent of $\lambda$ and $x$,
   such that
   \[ r\|f'\|_\infty\P\left(|X^\lambda_s|<\tfrac{1}{2}\right)\leq\frac{\epsilon}{c}
   		\quad\mbox{for all } s\in [0,s_1]. \]
   We obtain
   \begin{equation} \label{eq:gBM4}
	\sup_{s\in [0,s_1]}\sup_{\lambda\in\Lambda}\sup_{x\in [-2r,2r]^c}
	\E\big[\big|X_{s}^{\lambda,x} f'(X_{s}^{\lambda,x})-xf'(x)\big|\big] \leq\frac{\epsilon}{c}.
   \end{equation}
   Combining inequality~\eqref{eq:gBM3} and inequality~\eqref{eq:gBM4} with the definition 
   of $c$ yields 
   \[ \sup_{\lambda\in\Lambda}\,\sup_{x\in\R}
 	\frac{|\mu|}{t}\int_0^t\E\big[\big|X_{s}^{\lambda,x} f'(X_{s}^{\lambda,x})-xf'(x)\big|\big]\,\d s 
 	\leq\epsilon \quad\mbox{for all } t\in [0,s_1]. \]
 \end{enumerate} 
 By similar arguments, it follows from Lemma~\ref{lem:gBM2} with $q=2$ that  
 \begin{equation} \label{eq:vol}
  \lim_{t\downarrow 0}\,\sup_{\lambda\in\Lambda}\,\sup_{x\in\R}\frac{\sigma^2}{2t}
  \int_0^t\E\big[\big|(X_{s}^{\lambda,x})^2 f''(X_{s}^{\lambda,x})-x^2 f''(x)\big|\big]\,ds =0. 
 \end{equation}
 As seen before, equation~\eqref{eq:gBM I} is a consequence of inequality~\eqref{eq:drift}
 and inequality~\eqref{eq:vol}. 
 
 Fifth, it follows from Theorem~\ref{thm:construct} that Assumption~\ref{ass:gen} is satisfied. 
 In addition, by Lemma~\ref{lem:gen convex}, condition~\eqref{eq:gen} holds for all 
 $f,g\in\UC_\kappa$. Hence, Theorem~\ref{thm:gen} implies $C_c^2\subset D(A)$ with 
 \[ (Af)(x)=\sup_{\lambda\in\Lambda}\left(\frac{1}{2}\sigma^2x^2f''(x)+\mu xf'(x)\right)
 	\quad\mbox{for all } f\in C_c^2 \mbox{ and } x\in\R. \qedhere \]
\end{proof}

\subsection{G-expectation}
\label{sec:Gexp}

In this subsection, we construct a semigroup corresponding to a Brownian motion with 
uncertain drift and volatility, which is the so-called $G$-Brownian motion,
see~\cite{PengG,MR2474349}. Let $(W_t)_{t\geq 0}$ be a $d$-dimensional Brownian 
motion on on a probability space $(\Omega,\F,\P)$. We choose the weight function 
\[ \kappa\colon\R^d\to (0,\infty), x\mapsto (1+|x|^2)^{-1}. \]
Let $\Lambda\subset\R^d\times\S^d_+$ be a bounded set, where $\S^d_+$ denotes 
the set of all symmetric, positive semi-definite $d\times d$-matrices. For every $t\geq 0$, 
$f\in\UC_\kappa(\R^d;\R)$ and $x\in\R^d$, we define 
\[ \big(I(t)f\big)(x):=\sup_{\lambda\in\Lambda}\E[f(x+\sigma W_t+\mu t)], \]
where $\lambda:=(\mu,\sigma^2)$ and $\sigma:=\sqrt{\sigma^2}$ denotes the matrix 
square root of $\sigma^2$. Moreover, for every $\lambda\in\Lambda$, let 
$(S_\lambda(t))_{t\geq 0}$ be the linear semigroup defined by 
\[ \big(S_\lambda(t)f\big)(x):=\E[f(x+\sigma W_t+\mu t)]
	\quad\mbox{for all } t\geq 0, \, f\in\UC_\kappa \mbox{ and } x\in\R^d. \]

\begin{lemma} \label{lem:Gexp}
 For every $t\geq 0$, $\lambda\in\Lambda$, $f\in\UC_\kappa$ and $x\in\R^d$, 
 \[ \E[1+|x+\sigma W_t+\mu t|^2]\kappa(x)\leq e^{\omega t} \quad\mbox{and}\quad 
 	\E[|f(x+\sigma W_t+\mu t)|]\leq\|f\|_\kappa (1+|x|^2)e^{\omega t}, \]
 where 
 $\omega:=\sup_{\lambda\in\Lambda}\max\{1+|\mu|^2+|\sigma^2|,\sqrt{2}|\mu|\}<\infty$.
\end{lemma}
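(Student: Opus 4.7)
The plan is to prove the first bound directly by expanding the square, computing the Gaussian expectations explicitly, and then reducing the conclusion to an elementary one-variable inequality. The second bound will then be an almost immediate consequence of the definition of $\|\cdot\|_\kappa$: from $|f(y)|\kappa(y)\le\|f\|_\kappa$ for all $y$, we get pointwise $|f(y)|\le\|f\|_\kappa(1+|y|^2)$, so taking expectations and applying the first inequality (after multiplying by $(1+|x|^2)=1/\kappa(x)$) gives
\[ \E[|f(x+\sigma W_t+\lambda t)|]\le\|f\|_\kappa\,\E[1+|x+\sigma W_t+\lambda t|^2]\le\|f\|_\kappa(1+|x|^2)e^{\omega t}. \]

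For the first inequality, I would expand
\[ |x+\sigma W_t+\lambda t|^2=|x|^2+|\sigma W_t|^2+|\lambda|^2t^2+2\langle x,\sigma W_t\rangle+2\langle x,\lambda\rangle t+2\langle\sigma W_t,\lambda\rangle t. \]
Taking expectation kills the two terms linear in $W_t$ because $\E[W_t]=0$, and the Frobenius-norm identity $\E[|\sigma W_t|^2]=\tr(\sigma\E[W_tW_t^\top]\sigma^\top)=|\sigma|^2t$ handles the quadratic Brownian term. The only subtle piece is the deterministic cross term $2\langle x,\lambda\rangle t$, which I would control via the elementary inequality $2|x|\le 1+|x|^2$, giving
\[ 2\langle x,\lambda\rangle t\le 2|x||\lambda|t\le|\lambda|t(1+|x|^2)\le\sqrt{2}|\lambda|t(1+|x|^2). \]
After multiplying by $\kappa(x)=(1+|x|^2)^{-1}\le 1$, the target inequality reduces to
\[ 1+\sqrt{2}|\lambda|t+|\sigma|^2 t+|\lambda|^2 t^2\le e^{\omega t}\qquad\text{for all }t\ge 0. \]

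The main (and only) non-routine step is verifying this elementary exponential bound with the particular $\omega$ given in the statement; this is where the seemingly strange form of $\omega$ is used. I would set $g(t):=e^{\omega t}-1-\sqrt{2}|\lambda|t-|\sigma|^2 t-|\lambda|^2 t^2$ and note $g(0)=0$. Then $g'(0)=\omega-\sqrt{2}|\lambda|-|\sigma|^2\ge 0$, since $\omega\ge 1+|\sigma|^2+|\lambda|^2$ and $1+|\lambda|^2\ge\sqrt{2}|\lambda|$ (from $(|\lambda|/\sqrt 2-1)^2\ge 0$). Moreover $g''(t)=\omega^2 e^{\omega t}-2|\lambda|^2\ge\omega^2-2|\lambda|^2\ge 0$ because $\omega\ge\sqrt{2}|\lambda|$. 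Hence $g'$ is non-decreasing with $g'(0)\ge 0$, so $g'\ge 0$ and $g\ge 0$ throughout $[0,\infty)$, which finishes the argument. Passing to the supremum over $\sigma\in\Sigma$, $\lambda\in\Lambda$ is immediate since both estimates are uniform in $(\sigma,\lambda)$ via the bounded quantities $\sup|\sigma|^2$ and $\sup|\lambda|$.
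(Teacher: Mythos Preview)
Your proof is correct and follows essentially the same route as the paper: expand the square, compute the Gaussian moments, bound the cross term, and reduce to an elementary inequality of the form $1+at+bt^2\le e^{\omega t}$, with the second estimate deduced from the first via $|f(y)|\le\|f\|_\kappa(1+|y|^2)$. The only cosmetic difference is that the paper controls the cross term by Young's inequality $2|x|\,|\lambda|t\le |x|^2t+|\lambda|^2t$ (leading to the linear coefficient $1+|\sigma|^2+|\lambda|^2$), whereas you use $2|x|\le 1+|x|^2$; both reductions land on the same elementary bound, which the paper states and you verify by a second-derivative argument.
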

\begin{proof}
 We use Young's inequality to estimate
 \begin{align*}
  &\E[1+|x+\sigma W_t+\mu t|^2]\kappa(x) \\
  &=\E\left[1+|x|^2+2\langle x,\sigma W_t\rangle+2\langle x,\mu t\rangle+|\sigma W_t|^2
  +2\langle\sigma W_t,\mu t\rangle+|\mu|^2t^2\right]\kappa(x) \\
  &\leq (1+|x|^2+2|x|\cdot |\mu|t+|\sigma|^2t+|\mu|^2t^2)\kappa(x) \\
  &\leq (1+|x|^2+|x|^2t+|\mu|^2t+|\sigma|^2t+|\mu|^2t^2)\kappa(x) \\
  &\leq (1+(1+|\mu|^2+|\sigma|^2)t+|\mu|^2t^2) \leq e^{\omega t}. 
 \end{align*}
 Here, the last inequality holds, because $1+at+bt^2\leq e^{\max\{a,\sqrt{2b}\}t}$ for all 
 $a,b,t\ge 0$. The second inequality of the claim follows from the first one, see the proof
 of Lemma~\ref{lem:gBM}. 
\end{proof}

Lemma~\ref{lem:Gexp} ensures that $S_\lambda(t)f\colon\R^d\to\R$ and $I(t)\colon\R^d\to\R$ 
are well-defined functions for all $\lambda\in\Lambda$, $t\geq 0$ and $f\in\UC_\kappa$.
We denote by $\tr(a)$ the trace of a matrix $a\in\R^{d\times d}$. Recall that
$\nabla_\mu f:=\langle\mu,\nabla f\rangle$.

\begin{theorem}
 The family $(I(t))_{t\geq 0}$ satisfies Assumption~\ref{ass:I} and Assumption~\ref{ass:D}, 
 i.e., Theorem~\ref{thm:construct} yields a semigroup $(S(t))_{t\geq 0}$ on $\UC_\kappa$ 
 associated to $(I(t))_{t\geq 0}$. Furthermore, it holds $C_c^2\subset D(A)$ with
 \[ Af=\sup_{\lambda\in\Lambda}\left(\frac{1}{2}\tr(\sigma^2 D^2f)+\nabla_\mu f\right)
 	\quad\mbox{for all } f\in C_c^2. \]
\end{theorem}
\begin{proof}
 First, we show that $(I(t))_{t\geq 0}$ satisfies Assumption~\ref{ass:I} and 
 Assumption~\ref{ass:D}. Since $I(t)$ is translation-invariant and a contraction w.r.t. the 
 supremum norm, we conclude $I(t)\colon\Lip_b(c)\to\Lip_b(c)$ for all $c,t\geq 0$. Furthermore,
 it follows from Lemma~\ref{lem:Gexp} that $\|I(t)f\|_\kappa\leq e^{\omega t}\|f\|_\kappa$
 and $\|I(t)f-I(t)g\|_\kappa\leq e^{\omega t}\|f-g\|_\kappa$ for all $t\geq 0$ and 
 $f,g\in\UC_\kappa$. In particular, we have $I(t)\colon\UC_\kappa\to\UC_\kappa$ for
 all $t\geq 0$. In addition, for every $f\in C_c^2$, It\^o's formula implies
 \[ \|I(t)f-f||_\kappa\leq ct \quad\mbox{for all } t\geq 0, \]
 where $c:=\sup_{\\in\Lambda}\big(|\mu|\cdot\|\nabla f\|_\infty
 +\frac{1}{2}|\sigma|^2\|D^2 f\|_\infty\big)<\infty$. 
 
 Second, we show that 
 \[ \lim_{t\downarrow 0}\left\|\frac{I(t)f-f}{t}-\sup_{\lambda\in\Lambda}
 	\left(\frac{1}{2}\tr(\sigma^2 D^2f)+\nabla_\mu f\right)\right\|_\infty=0
 	\quad\mbox{for all } f\in C_c^2. \]
 Let $f\in C_c^2$. We use It\^o's formula to estimate
 \begin{align*}
  &\left\|\frac{I(t)f-f}{t}-\sup_{\lambda\in\Lambda}
 	\left(\frac{1}{2}\tr(\sigma^2 D^2f)+\nabla_\mu f\right)\right\|_\infty
 	\leq\sup_{\lambda\in\Lambda}\left\|\frac{S_\lambda(t)f-f}{t}
  	-\frac{\tr(\sigma^2 D^2 f}{2})-\nabla_\mu f\right\|_\infty \\
  &\leq\sup_{\lambda\in\Lambda}\frac{1}{t}\int_0^t
  	\left(\|\nabla_\mu f(\cdot+X^\lambda_s)-\nabla_\mu f\|_\infty
  	+\frac{1}{2}\|\tr(\sigma^2 D^2 f)(\cdot+X^\lambda_s)-\tr(\sigma^2 D^2 f)\|_\infty\right)\d s,
 \end{align*}
 where $X^\lambda_s:=\sigma W_t+\mu s$ for all $\lambda\in\R^d$ and $s\geq 0$.
 Since $\Lambda$ is bounded and $f\in C_c^2$, the right hand side converges
 to zero as $t\downarrow 0$. It follows from $\|\cdot\|_\kappa\leq\|\cdot\|_\infty$,
 Theorem~\ref{thm:gen} and Lemma~\ref{lem:gen convex} that $C_c^2\subset D(A)$
 with 
 \[ Af=\sup_{\lambda\in\Lambda}\left(\frac{1}{2}\tr(\sigma^2 D^2f)+\nabla_\mu f\right)
 	\quad\mbox{for all } f\in C_c^2. \qedhere \]
\end{proof}

\subsection{Ordinary differential equations}
\label{sec:ode}

In this subsection, we obtain the well-known existence and uniqueness result 
for ODEs with locally Lipschitz continuous data. Let 
$f\colon\R^d\to\R^d$ be a function, which satisfies the following conditions:
\begin{itemize}
 \item There exists $K\geq 0$ such that $|f(x)|\leq K(1+|x|)$ for all $x\in\R^d$.
 \item For every $r\geq 0$ there exists $L_r\geq 0$ such that 
  \[ |f(x)-f(y)|\leq L_r|x-y| \quad\mbox{for all } x,y\in B(0,r). \]
\end{itemize}
We define $I(t)x:=x+tf(x)$ for all $t\geq 0$ and $x\in\R^d$.

\begin{theorem}
 The family $(I(t))_{t\geq 0}$ satisfies Assumption~\ref{ass:I} and Assumption~\ref{ass:D}, 
 i.e., Theorem~\ref{thm:construct} yields a semigroup $(S(t))_{t\geq 0}$ on $\R^d$ 
 associated to $(I(t))_{t\geq 0}$. Furthermore, for every $x\in\R^d$ the Cauchy problem 
 \[ y'(t)=f(y(t)) \quad\mbox{for all } t\geq 0, \quad y(0)=x, \]
 has a unique classical solution given by $y(t):=S(t)x$.
\end{theorem}
\begin{proof}
 First, we verify Assumption~\ref{ass:I} and Assumption~\ref{ass:D}. It holds 
 $|I(t)x|\leq\alpha(r,t)$ for all $r,t\geq 0$ and $x\in\R^d$, where
 \[ \alpha(r,t):=\begin{cases} 
 	e^{2Kt}, & r\leq 1, \\
 	e^{2Kt}r, & r>1. \end{cases} \]
 Moreover, we have $|I(t)x-I(t)y|\leq e^{L_r t}|x-y|$ for all $r,t\geq 0$ and $x,y\in B(0,r)$. 
 In addition, Lemma~\ref{lem:iterate} implies that the sequence 
 $I(\pi_n^t)x)_{n\in\N}\subset\R^d$ is bounded, and therefore relatively compact for
 all $(x,t)\in\R^d\times\T$. Theorem~\ref{thm:construct} yields a semigroup $(S(t))_{t\geq 0}$
 on $\R^d$ associated to $(I(t))_{t\geq 0}$. 
 
 Second, we note that
 \[ \frac{I(t)x-x}{t}-f(x)=0 \quad\mbox{for all } t>0 \mbox{ and } x\in\R^d. \]
 Furthermore, one can show that condition~\eqref{eq:gen} holds for all $x,y\in\R^d$,
 for details we refer to the proofs of Lemma~\ref{lem:rde} and Theorem~\ref{thm:rde2} 
 below. Hence, it follows from Theorem~\ref{thm:gen} that $D(A)=\R^d$ and 
 $Ax=f(x)$ for all $x\in\R^d$. Let $x\in\R^d$ and define $y(t):=S(t)x$ for all $t\geq 0$.
 For every $t\geq 0$, the right-derivative of $y$ exists and is given by $Ay(t)=f(y(t))$.
 Since $y$ and $f(y(\cdot))$ are continuous, it follows 
 from~\cite[Corollary~1.2 in Section~2]{pazy2012semigroups} that $y$ is continuously 
 differentiable. The uniqueness follows from Theorem~\ref{thm:CP}.
\end{proof}

\subsection{Lipschitz perturbation}
\label{sec:rde}

Throughout this subsection, we consider vector valued functions $f\colon\R^d\to\R^m$.
We construct a semigroup corresponding to a perturbed linear semigroup, which adds a 
nonlinear zero-order coupling to the generator of the linear semigroup. The ODEs 
from the previous section are including in this setting, as well as reaction-diffusion 
equations, see Example~\ref{ex:rde}.

\begin{assumption} \label{ass:rde}
Let $(S_0(t))_{t\geq 0}$ be a strongly continuous monotone linear semigroup on 
$C_0(\R^d;\R^m)$, which satisfies the following condition:
\begin{enumerate}
 \item There exists $\omega\geq 0$ such that $\|S_0(t)f\|_\infty\leq e^{\omega t}\|f\|_\infty$
  for all $t\geq 0$ and $f\in C_0$.
 \item $\L^{S_0}\cap\Lip_0\subset C_0$ is dense.
 \item $S_0(t)\colon\Lip_0(c)\to\Lip_0(e^{\omega t}c)$ for all $c,t\geq 0$.
 \item $\lim_{|x|\to\infty}\sup_{t\in [0,T]}|S_0(t)f)(x)|=0$ for all $T\geq 0$ and $f\in C_0$. 
\end{enumerate}
 Furthermore, let $\Psi\colon\R^m\to\R^m$ be a continuous function with $\Psi(0)=0$, 
 which satisfies the following conditions:
 \begin{enumerate} \setcounter{enumi}{4}
  \item There exists $K\geq 0$ such that $|\Psi(x)|\leq K(1+|x|)$ for all $x\in\R^m$.
  \item For every $r\geq 0$ there exists $L_r\geq 0$ such that 
   \[ |\Psi(x)-\Psi(y)|\leq L_r|x-y| \quad\mbox{for all } x,y\in B(0,r). \]
 \end{enumerate}
\end{assumption}

W.l.o.g.~we assume that the mapping $r\mapsto L_r$ is  non-decreasing. We define
\[ \big(I(t)f\big)(x):=\big(S_0(t)f\big)(x)+t\Psi(f(x))
	\quad\mbox{for all } t\geq 0, \, f\in C_0 \mbox{ and } x\in\R^d. \]

\begin{theorem} \label{thm:rde1} 
 The family $(I(t))_{t\geq 0}$ satisfies Assumption~\ref{ass:I} and Assumption~\ref{ass:D}, 
 i.e., Theorem~\ref{thm:construct} yields a semigroup $(S(t))_{t\geq 0}$ on $C_0$ 
 associated to $(I(t))_{t\geq 0}$. 
\end{theorem}
\begin{proof}
 First, we verify Assumption~\ref{ass:I}. It holds $I(t)\colon C_0\to C_0$, because
 $S_0(t)\colon C_0\to C_0$ and $\Psi$ is a continuous function with $\Psi(0)=0$.
 Clearly, it holds $I(0)=\id_{C_0}$. Let $t\geq 0$ and $f\in C_0$. If $\|f\|_\infty\leq 1$,
 we use Assumption~\ref{ass:rde}(i) and~(v) to estimate
 \[ \|I(t)f\|_\infty \leq e^{\omega t}\|f\|_\infty+tK(1+\|f\|_\infty)
 	\leq e^{\omega t}+2Kt \leq e^{(\omega+2K)t}. \]
 Similarly, if $\|f\|_\infty>1$, we obtain the estimate 
 $\|I(t)f\|_\infty \leq e^{(\omega+2K)t}\|f\|_\infty$. Hence, Assumption~\ref{ass:I}(i) 
 holds with 
 \[  \alpha(r,t):=\begin{cases}
 	e^{(\omega+2K)t}, & r\leq 1, \\
 	e^{(\omega+2K)t}r, & r>1,
 	\end{cases}  \quad\mbox{for all } r,t\geq 0. \]
 Furthermore, for every $r,t\geq 0$ and $f,g\in B(0,R)$, Assumption~\ref{ass:rde}(i) and~(vi)
 implies
 \[\|I(t)f-I(t)g\|_\infty\leq e^{\omega t}\|f-g\|_\infty+t L_r\|f-g\|_\infty
 	\leq e^{(\omega+L_{r})t}\|f-g\|_\infty. \]
 
 Second, we show that $\L^I=\L^{S_0}$. For every $t\geq 0$ and $f\in C_0$, it follows
 from $\Psi(0)=0$ and Assumption~\ref{ass:rde}(vi) that
 \[ \|I(t)f-S_0(t)f\|_\infty=t\|\psi(f)\|_\infty\leq tL_r, 
 	\quad\mbox{where}\quad r:=\|f\|_\infty. \]
 In particular, Assumption~\ref{ass:rde}(ii) implies that $\L^I\cap\Lip_0\subset C_0$
 is dense. Hence, in order to verify Assumption~\ref{ass:D}, it suffices to show that
 the assumptions from Lemma~\ref{lem:compact} are satisfied for all $f\in\Lip_0$.
 
 Forth, we show that 
 \[ I(t)\colon\Lip_0(c)\cap B(0,r)\to\Lip_0\big(e^{(\omega+L_r)t}c\big)
 	 \quad\mbox{for all } c,r,t\geq 0. \]
 For every $c,r,t\geq 0$, $f\in\Lip_0(c)\cap B(0,r)$ and $x,y\in\R^d$, it follows from
 Assumption~\ref{ass:rde}(iii) and~(v) that
  \begin{align*}
  |(I(t)f)(x)-(I(t)f)(y)|
  &\leq |(I_0(t)f)(x)-(I_0(t)f)(y)|+t|\Psi(f(x))-\Psi(f(y))| \\
  &\leq\big(e^{\omega t}+L_r t\big)c|x-y| \leq e^{(\omega+L_r)t}c|x-y|.
 \end{align*}
 Let $c\geq 0$, $f\in\Lip_0(c)$ and $t\in\T$. By induction, it follows that 
 $I(\pi_n^t)f\in\Lip_0(e^{(\omega+L_r)t}c)$ for all $n\in\N$, where $r:=\alpha(c,t)$.
 In particular, the sequence $I(\pi_n^t)_{n\in\N}$ is equicontinuous. 
 
 Fifth, it remains to show that 
 \[ \lim_{|x|\to\infty}\sup_{n\in\N}|(I(\pi_n^t)f)(x)|=0 
 	\quad\mbox{for all } f\in\Lip_0 \mbox{ and } t\in\T. \] 
 To do so, for every $r,t\geq 0$, we define
 \[ J_r(t)\colon C_0\to C_0,\; f\mapsto S_0(t)f+tL_r f. \]
 Since $S_0(t)$ is monotone, it holds $J_r(t)f\leq J_r(t)g$ for all $r,t\geq 0$ and
 $f,g\in C_0$ with $0\leq f\leq g$. Moreover, the mapping $r\mapsto J_r(t)f$ is
 non-decreasing for all $f\in C_0$ with $f\geq 0$, because we assumed the 
 mapping $r\mapsto L_r$ to be non-decreasing. We show by induction that 
 \begin{equation} \label{eq:IJ}
  \big|I(t)^k f\big|\leq \big(J_{\alpha(r,kt)}(t)\big)^k|f| 
  \quad\mbox{for all } k\in\N,\, r,t\geq 0 \mbox{ and } f\in B(0,r). 
 \end{equation}
 Let $r,t\geq 0$ and $f\in B(0,r)$. We use the monotonicity of $S_0(t)$, $\Psi(0)=0$
 and Assumption~\ref{ass:rde}(vi) to estimate
 \[ |I(t)f|\leq |I_0(t)f|+t|\Psi(f)|\leq S_0(t)|f|+L_rt|f|=J_r(t)|f|\leq J_{\alpha(r,t)}(t)|f|. \]
 For the induction step, we assume that inequality~\eqref{eq:IJ} holds for some fixed 
 $k\in\N$. Let $r,t\geq 0$ and $f\in B(0,r)$. It follows from $I(t)f\in B(0,\alpha(r,t))$,
 inequality~\eqref{eq:IJ}, inequality~\eqref{eq:alpha} and the monotonicity of $J$ that
\begin{align*}
  \big|I(t)^{k+1}f\big| 
  &=\big|I(t)^kI(t)f\big|\leq\big(J_{\alpha(\alpha(r,t),kt)}(t)\big)^k|I(t)f| \\
  &\leq\big(J_{\alpha(r,(k+1)t)}(t)\big)^kJ_r(t)|f|
  \leq\big(J_{\alpha(r,(k+1)t)}(t)\big)^{k+1}|f|.
 \end{align*} 

 Sixth, since $(S_0(t))_{t\geq 0}$ is a linear semigroup, the binomial theorem implies
 \begin{equation} \label{eq:bin}
  J_r(t)^k=\sum_{l=0}^k \binom{k}{l} (L_rt)^l S_0((k-l)t) 
  	\quad\mbox{for all } r,t\geq 0 \mbox{ and } k\in\N. 
 \end{equation}
 Let $f\in C_0$, $t\in\T$ and $\epsilon>0$. Choose $n_0\in\N$ with $2^n t\in\N$
 for all $n\geq n_0$. By inequality~\eqref{eq:IJ}, it holds
 \begin{equation}
  |I(\pi_n^t)f|\leq J_r(2^{-n})^{2^n t}|f| \quad\mbox{for all } n\geq n_0,
 \end{equation}
 where $r:=\alpha(\|f\|_\infty,t)$. In addition, by Assumption~\ref{ass:rde}(iv) there
 exists $R\geq 0$ with 
 \begin{equation} \label{eq:S0}
  \big|\big(S_0(s)|f|\big)(x)\big|\leq e^{-L_r t}\epsilon 
  \quad\mbox{for all } s\in [0,t] \mbox{ and } x\in B(0,R)^c. 
 \end{equation}
 Let $x\in B(0,R)^c$, $n\geq n_0$ and $k:=2^nt$. We use equation~\eqref{eq:bin}-\eqref{eq:S0} 
 and the binomial theorem to estimate
 \begin{align*}
  &|I(\pi_n^t)f|(x)
  \leq\big(J_r(2^{-n})^k|f|\big)(x) 
  =\sum_{l=0}^k \binom{k}{l}(L_r 2^{-n})^l\big(S_0\big((k-l)2^{-n}\big)|f|\big)(x) \\
  &\leq e^{-L_r t}\epsilon\sum_{l=0}^k \binom{k}{l}(L_r 2^{-n})^l 
  =e^{-L_r t}\epsilon (1+L_r 2^{-n})^k 
  \leq e^{-L_r t}\epsilon e^{L_r k 2^{-n}} =\epsilon. 
 \end{align*}
 We obtain $\lim_{|x|\to\infty}\sup_{n\in\N}|(I(\pi_n^t)f)(x)|=0$ for all $(f,t)\in C_0\times\T$.
\end{proof}

To determine the generator of $(S(t))_{t\geq 0}$, we need the following recursion.

\begin{lemma} \label{lem:rde}
 For every $f,g\in C_0$ and $k,n\in\N$,
 \begin{align*}
  &I(2^{-n})^kf-I(2^{-n})^kg \\
  &=S_0(k2^{-n})(f-g)+2^{-n}\sum_{l=0}^{k-1}S_0\big((k-1-l)2^{-n}\big)
  \left(\Psi\big(I(2^{-n})^lf\big)-\Psi\big(I(2^{-n})^lg\big)\right). 
 \end{align*}	 
\end{lemma}
\begin{proof}
 Let $f,g\in C_0$ and $n\in\N$. We prove the claim by induction w.r.t.~$k\in\N$.
 For $k=1$, linearity of $S_0(2^{-n})$ implies
 \[ I(2^{-n})f-I(2^{-n})g =S_0(2^{-n})(f-g)+2^{-n}\big(\Psi(f)-\Psi(g)\big). \]
 For the induction step, we assume that the claim holds for some fixed $k\in\N$.
 Since $(S_0(t))_{t\geq 0}$ is a linear semigroup, we obtain
 \begin{align*}
  &I(2^{-n})^{k+1}f-I(2^{-n})^{k+1}g 
  	=I(2^{-n})^k I(2^{-n})f-I(2^{-n})^k I(2^{-n})g \\
  &=S_0\big(k2^{-n}\big)\left(I(2^{-n})f-I(2^{-n})g\right) \\
  	&\quad\; +2^{-n}\sum_{l=0}^{k-1} S_0\big((k-1-l)2^{-n}\big)
  	\left(\Psi\big(I(2^{-n})^l I(2^{-n})f\big)-\Psi\big(I(2^{-n})^l I(2^{-n})g\big)\right) \\
  &=S_0\big(k2^{-n}\big)S_0\big(2^{-n}\big)(f-g)+2^{-n}S_0(k2^{-n})\big(\Psi(f)-\Psi(g)\big) \\
 	 &\quad\; +2^{-n}\sum_{l=0}^{k-1} S_0\big((k-(l+1))2^{-n}\big)
  	\left(\Psi\big(I(2^{-n})^{l+1}f\big)-\Psi\big(I(2^{-n})^{l+1}g\big)\right) \\
  &=S_0\big((k+1)2^{-n}\big)(f-g)+2^{-n}S_0\big(k2^{-n}\big)\left(\Psi(f)-\Psi(g)\right) \\
  	&\quad\; +2^{-n}\sum_{l=1}^k S_0\big((k-l)2^{-n}\big)
  	\left(\Psi\big(I(2^{-n})^l f\big)-\Psi\big(I(2^{-n})^l g\big)\right) \\
  &=S_0((k+1)2^{-n})(f-g) +2^{-n}\sum_{l=0}^k S_0\big((k-l)2^{-n}\big)
  	\left(\Psi\big(I(2^{-n})^l f\big)-\Psi\big(I(2^{-n})^lg\big)\right). \qedhere
 \end{align*}
\end{proof}

\begin{theorem} \label{thm:rde2}
 It holds $D(A_0)\subset D(A)$ with $Af=A_0 f+\psi(f)$ for all $f\in D(A_0)$,
 where $A_0$ denotes the generator of $(S_0(t))_{t\geq 0}$.
\end{theorem}
\begin{proof}
 For every $f\in D(A_0)$,
 \[ \frac{I(t)f-f}{t}-A_0-\Psi(f)=\frac{S_0(t)f-f}{t}-A_0\to 0
 	\quad\mbox{as } t\downarrow 0. \]
 It remains to show inequality~\eqref{eq:gen}. Let $f,g\in C_0$ and $\epsilon>0$.  
 Let $r:=\max\{\|f\|_\infty,\|g\|_\infty\}$ and $c:=\alpha(2r,t)$. 
 Since $(S_0(t))_{t\ge 0}$ is strongly continuous, there exists $t_0\in (0,1]$ such that
 \[ \|S_0(t)g-g\|_\infty\leq\frac{\epsilon}{2} \quad\mbox{and}\quad
 	e^{(2\omega+L_c)t}L_c t \|g\|_\infty\leq\frac{\epsilon}{2} 
 	\quad\mbox{for all } t\in [0,t_0]. \]
 Let $k,n\in\N$ with $k2^{-n}\leq t_0$. We use Lemma~\ref{lem:rde}, 
 Assumption~\ref{ass:rde}(i) and~(vi), and Lemma~\ref{lem:iterate} to 
 estimate
 \begin{align*}
  &\left\|\frac{I(2^{-n})^k(f+2^{-n}g)-I(2^{-n})^kf}{2^{-n}}-g\right\|_\infty \\
  &\leq\|S_0(k2^{-n})g-g\|_\infty \\
  	&\quad\; +\sum_{l=0}^{k-1} \left\|S_0\big((k-1-l)2^{-n}\big)
  	\left(\Psi\big(I(2^{-n})^l(f+2^{-n}g)\big)-\Psi\big(I(2^{-n})^lf\big)\right)\right\|_\infty \\
  &\leq\frac{\epsilon}{2}+\sum_{l=0}^{k-1}
  	e^{\omega(k-1-l)2^{-n}}L_c e^{(\omega+L_c)l2^{-n}}\|2^{-n}g\|_\infty \\
  &\leq\frac{\epsilon}{2}+k2^{-n}e^{\omega (k-1)2^{-n}}L_c e^{(\omega+L_c)k2^{-n}}\|g\|_\infty
  	\leq\epsilon. \qedhere
 \end{align*}
\end{proof}

\begin{example} \label{ex:rde}
 Let $(W_t)_{t\geq 0}$ be a $d$-dimensional Brownian motion on a probability space 
 $(\Omega,\F,\P)$. We define
 \[ (S_0(t)f)(x):=\E[f(x+W_t)] \quad\mbox{for all } t\geq 0, f\in C_0 \mbox{ and } x\in\R^d. \]
 Then, the family $(S_0(t))_{t\geq 0}$ satisfies Assumption~\ref{ass:rde}, and
 \[ \lim_{t\downarrow 0}\left\|\frac{S_0(t)f-f}{t}-\frac{1}{2}\Delta f\right\|_\infty=0
 	\quad\mbox{for all } f\in C_0^2, \]
 where the Laplacian is defined component-wise. Furthermore, let $\Psi$ be a function, 
 which satisfies Assumption~\ref{ass:rde}, and $(S(t))_{t\geq 0}$ an associated 
 semigroup as in Theorem~\ref{thm:rde1}. Then, by Theorem~\ref{thm:rde2}, it holds 
 $C_0^2\subset D(A)$ with $Af=\frac{1}{2}\Delta f+\Psi(f)$ for all $f\in C_0^2$. 
 Equations of the form $\partial_t f=\frac{1}{2}\Delta f+\Psi(f)$ are known as reaction 
 diffusion systems, see, e.g.~\cite[Subsection~7.3]{Lunardi1995}. 
\end{example}

\bibliographystyle{abbrv}
\bibliography{swaa}

\end{document}